\newtheorem{thm}{Theorem}[section]
\newtheorem{lem}[thm]{Lemma}
\newtheorem{prop}[thm]{Proposition}
\theoremstyle{definition}
\theoremstyle{remark}
\newtheorem{rem}[thm]{Remark}
\theoremstyle{conjecture}
\theoremstyle{example}
\begin{document}

\title[Non-convex planar domains]{Sobolev regularity for optimal transport maps of non-convex planar domains}

\author{Connor Mooney}
\address{Department of Mathematics, UC Irvine}
\email{\tt mooneycr@math.uci.edu}
\author{Arghya Rakshit}
\address{Department of Mathematics, UC Irvine}
\email{\tt arakshit@uci.edu}

\begin{abstract}
We prove a sharp global $W^{2,\,p}$ estimate for potentials of optimal transport maps that take a certain class of non-convex planar domains to convex ones.
\end{abstract}
\maketitle


\section{Introduction}
Optimal transport maps play an important role in physics, geometry, economics, and meteorology. The regularity of optimal transport maps is a delicate matter that for the most part has focused on the case that the source and target domains are convex. However, this condition is not satisfied in many applications. In this paper we initiate the study of the Sobolev regularity of optimal transport maps in the plane, where the source domain is non-convex.

The setting is as follows. Let $\Omega_1$ and $\Omega_2$ be bounded domains in $\mathbb{R}^2$ of unit area. We assume that $\Omega_2$ is convex. Then the optimal transport map from $\Omega_1$ to $\Omega_2$ is the gradient map of a convex function $u$ on $\mathbb{R}^2$ which satisfies (see \cite{C5}):
\begin{equation}\label{MA}
\det D^2u = \chi_{\Omega_1}
\end{equation}
in the Alexandrov sense, $u$ is smooth and locally uniformly convex in $\Omega_1$, and
\begin{equation}\label{BoundaryCondition}
\nabla u(\Omega_1) = \Omega_2.
\end{equation}
We assume further that $\Omega_1$ is a convex domain $\Omega_0$ with a finite number of disjoint, $C^{1,\,1}$, uniformly convex holes a positive distance $\delta$ from $\partial \Omega_0$ and from each other removed. Our main result is:
\begin{thm}\label{Main}
We have $u \in C^{1,\,1/2}\left(\overline{\Omega_1}\right)$ with norm depending only on the diameters of $\Omega_1$ and $\Omega_2$, $\delta$, and the lower and upper bounds for the boundary curvatures of the holes in $\Omega_1$. We also have $u \in W^{2,\,p}(\Omega_1)$ for any $p < 2$, with norm depending only on the same quantities and $p$.
\end{thm}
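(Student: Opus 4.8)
The plan is to reduce everything to a local analysis near each hole $H_i$, the only place where \eqref{MA} has a vanishing right-hand side. The first step is the structural fact that $u$ is affine on each $\overline{H_i}$: since $\det D^2u=0$ on $H_i$ the gradient image $\nabla u(\overline{H_i})$ has zero Lebesgue measure, and combining this with the local uniform convexity of $u$ in $\Omega_1$ (which forbids the graph of $u$ from containing any segment whose interior meets $\Omega_1$) and the uniform convexity of $\partial H_i$, one rules out a nontrivial ruling of the graph over $\overline{H_i}$ by segments, so that graph is a single affine piece. Subtracting this affine function we may assume $u\ge 0$ with $\{u=0\}=\bigcup_i\overline{H_i}$, and in particular $\nabla u=0$ on each $\partial H_i$ once we know $u\in C^1$ there. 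Away from $\bigcup_i\overline{H_i}$ the density equals $1$, so the hypotheses already give smoothness and local uniform convexity; near $\partial\Omega_0$, where $\Omega_1$ coincides with the convex domain $\Omega_0$ and $\Omega_2$ is convex, the behaviour is governed by the standard boundary regularity theory for Monge--Ampère between convex domains (a convex corner of $\Omega_0$, for instance, only produces a bounded $D^2u$ by a scaling-invariance argument). Thus it remains to control $u$ in a one-sided collar of each $\partial H_i$.

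Fix a hole $H$ with curvature bounds $\kappa_1\le\kappa(\cdot)\le\kappa_2$ and set $d(x)=\mathrm{dist}(x,\overline H)$. The heart of the matter is the estimate $u(x)\le C\,d(x)^{3/2}$ on the collar (with $C$ depending only on $\kappa_1,\kappa_2,\delta$ and the diameters), from which $|\nabla u(x)|\le C\,d(x)^{1/2}$ follows by convexity and $\nabla u=0$ on $\partial H$, giving $u\in C^{1,1/2}(\overline{\Omega_1})$. The exponent is dictated by the explicit radial model ($\Omega_0,H,\Omega_2$ concentric disks), where $u(x)=\int_\rho^{|x|}\sqrt{s^2-\rho^2}\,ds$ and one computes exactly $u\sim d^{3/2}$, $|\nabla u|\sim d^{1/2}$, $|D^2u|\sim d^{-1/2}$. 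For the upper bound I would use the barrier $A\,d^{3/2}$: where $d$ is smooth its Hessian has eigenvalues $\tfrac32 A\,d^{1/2}\tfrac{\kappa_q}{1+\kappa_q d}$ and $\tfrac34 A\,d^{-1/2}$, so $\det D^2(A\,d^{3/2})\le\tfrac98 A^2\kappa_2$, and with $A\sim\kappa_2^{-1/2}$ this is a convex supersolution of \eqref{MA} vanishing on $\partial H$ (in adapted coordinates, with defining function $\phi=x_1+\tfrac{\kappa}{2}x_2^2$, one even has $\det D^2(\phi^{3/2})=\tfrac98\kappa$ identically, the cross terms cancelling). Comparing $u$ with translates and parabolic rescalings of this barrier over the sections $\{u<h\}$ then yields $u\le C\,d^{3/2}$, \emph{provided} one first shows that these sections shrink to $\overline H$ at the parabolic rate, with the correct eccentricity — this is the key a priori ``strict convexity with modulus'' estimate, which I would prove by a De Giorgi--type dyadic iteration in which at each scale one only has to dominate the bound obtained at the previous scale on the outer boundary of the current section.

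For the pointwise Hessian bound I would then rescale. Given $p$ at distance $d$ from $\overline H$ with nearest point $q\in\partial H$, pass to coordinates with $q$ at the origin and apply the area-preserving map $(x_1,x_2)\mapsto(x_1/d,\,x_2/\sqrt d)$, adapted to the osculating parabola of $\partial H$ at $q$ so that the rescaled hole again has curvature $\sim\kappa(q)$; the rescaled potential $\tilde u(y)=d^{-3/2}u(dy_1,\sqrt d\,y_2)$ then solves $\det D^2\tilde u=1$ off a fixed parabola, is nonnegative, and is bounded near the image $(1,0)$ of $p$ by the estimate $u\le C\,d^{3/2}$. Since $(1,0)$ sits at distance $\sim 1$ from the parabola and a $2$D solution of $\det D^2\tilde u=1$ is automatically strictly convex, the interior $C^2$ estimate gives $|D^2\tilde u(1,0)|\le C$, and transforming back produces $|D^2u(p)|\le C\,d^{-1/2}$ (the blow-up living in the direction normal to $\partial H$, with the tangential eigenvalue of order $d^{1/2}$). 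An alternative route I would keep in reserve is the Legendre dual: $u^\ast$ solves $\det D^2u^\ast=1+\sum_i|H_i|\,\delta_{P_i}$ on the convex domain $\Omega_2$, each hole becoming a point mass of strength $|H_i|$ at its image $P_i$; near $P_i$ one has $u^\ast=h_{\overline{H_i}}+(\text{a }C^{1,1}\text{ remainder})$, so $|D^2u^\ast|\sim\mathrm{dist}(\cdot,P_i)^{-1}$, and the estimates for $u$ follow by inverting $\nabla u^\ast$.

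Finally, $W^{2,p}$ for $p<2$ follows by integration: $D^2u$ is bounded away from the holes, while near each $\partial H$ one has $\int_{\{0<d<r_0\}}|D^2u|^p\le C\int_0^{r_0}t^{-p/2}\,dt\cdot\mathrm{length}(\partial H)<\infty$ precisely when $p<2$ — and the radial model shows this is sharp. The main obstacle is the second paragraph: establishing the parabolic-rate shrinking of the sections $\{u<h\}$ together with the $d^{3/2}$ upper bound, with constants depending only on the stated quantities. The naive barrier $A\,d^{3/2}$ has amplitude $A\sim\kappa_2^{-1/2}$ too small to dominate $u$ on the outer edge of a thin collar when $\delta$ is small, so the iteration (or, equivalently, the analysis of the dual Dirac problem) must be arranged so that the section geometry is self-improving under rescaling; this is where the uniform convexity of the holes and the positivity of $\delta$ are genuinely used.
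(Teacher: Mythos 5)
Your plan correctly isolates the crucial pointwise estimate $|D^2u(x)|\le C\,d(x)^{-1/2}$ (equivalently $u\le C\,d^{3/2}$ in the adapted frame), and your barrier computation $\det D^2\bigl(\phi^{3/2}\bigr)=\tfrac98\kappa$ for $\phi=x_1+\tfrac\kappa2 x_2^2$ is a nice check that the exponent $3/2$ is exact for the model geometry. But, as you yourself flag in the last paragraph, the barrier argument does not close: the barrier of amplitude $A\sim\kappa_2^{-1/2}$ cannot dominate $u$ on the outer edge of a thin collar, so the entire weight of the proof falls on showing that the sections $\{u<h\}$ centered on (or tangent to) $\partial H$ shrink at the parabolic rate with controlled eccentricity. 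You call this ``the key a priori strict convexity with modulus estimate'' and propose a De Giorgi--type dyadic iteration, but you do not supply it. This is precisely the content that the paper establishes via the centered-section machinery and the three dichotomy lemmas (small exterior fraction; large exterior fraction with transversal long axis; large exterior fraction with tangential long axis), each treated by a distinct compactness/Pogorelov argument, and then iterated across scales in the proof of Theorem~\ref{Main}. Without this, the claim that the rescaled potential $\tilde u(y)=d^{-3/2}u(dy_1,\sqrt d\,y_2)$ is uniformly bounded near $(1,0)$ (or that $(1,0)$ sits at unit distance from the rescaled hole with a normalized section around it, which is needed before interior $C^2$ estimates can be invoked) is unsupported. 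So the proposal reduces the theorem to the hard inequality but does not prove it.

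A secondary issue: your opening structural claim that $u$ is affine on each $\overline{H_i}$ (so that $\nabla u$ collapses each hole to a single point) is not justified by the argument you sketch. That $\nabla u(\overline{H_i})$ has Lebesgue measure zero follows from $\det D^2u=0$, but ruling out a one-dimensional image is a genuinely separate statement about the ruling of a degenerate convex graph, and ``local uniform convexity in $\Omega_1$'' only forbids rulings that leave $\overline{H_i}$. The paper's own Lemma~\ref{Segment2} deliberately only excludes linearity along segments \emph{centered} in $\overline{\Omega_1}$, leaving open the possibility of segments interior to a hole; the paper neither proves nor uses the collapse-to-a-point property, since the $C^{1,1/2}$ conclusion is obtained by integrating the Hessian bound \eqref{HessianBlowup} across $\Omega_1$, with no need to normalize $\nabla u$ on $\partial H_i$. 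Thus this step in your proposal is both unproved and unnecessary, and if you want to keep it (e.g.\ for the Legendre-dual picture with $\det D^2u^*=1+\sum_i|H_i|\delta_{P_i}$), it requires a real argument — for instance via a strict convexity theorem for $u^*$ on the convex target using its Monge--Ampère lower bound, which is a nontrivial input you would need to cite or prove.
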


Theorem \ref{Main} is sharp. To see this, consider the radially symmetric example where $\Omega_1$ is an annulus with inner radius $r$, $\Omega_2$ is a disk, and the potential is
\begin{equation}\label{ModelExample}
u(x) = \int_0^{|x|} (s^2-r^2)^{1/2}_+\,ds.
\end{equation}
Below we will refer to (\ref{ModelExample}) as the model example.

One motivation for Theorem \ref{Main} comes from the semigeostrophic equations (SGEs) from meteorology. The SGEs lead one to consider optimal transport maps that take a bounded density on the torus to the uniform one (see e.g. \cite{F}). When the source density is bounded between positive constants, $W^{2,\,1}$ estimates for the potential are available (\cite{DF}, \cite{DFS}, \cite{Sch}), which lead to long-time existence results for the SGEs (\cite{ACDF1}, \cite{ACDF2}, \cite{F}). However, in physically interesting cases, the SGEs involve optimal transport maps where the source density is allowed to vanish. In this case, $W^{2,\,1}$ estimates for the potential do not always hold (see \cite{M}). An important special situation is when the source density is the characteristic function of a domain (which need not be convex), as in the situation of Theorem \ref{Main}. For the SGEs, this corresponds to a fully nonlinear analogue of the vortex patch problem for the 2D Euler equations. Global $W^{2,\,1}$ estimates for optimal transport maps of non-convex domains may be useful for extending long-time existence results for the SGEs to this situation.

More generally, theorem \ref{Main} can be viewed as a step towards obtaining global regularity results for optimal transport maps of general non-convex source domains. The global regularity of optimal transport maps in the case of convex source and target domains is well-studied. Caffarelli proved that, in this case, the potentials are $C^{1,\alpha}$ up to the boundary, and $C^{2,\alpha}$ up to the boundary provided the domains are $C^2$ and uniformly convex \cite{C4}. Here $\alpha$ is small. The conditions on the domains required for global $C^{2,\alpha}$ regularity of the potential were recently relaxed to $C^{1,1}$ and merely convex \cite{CLW1}, and even slightly non-convex but close to convex in the $C^{1,1}$ sense \cite{CLW2}. In two dimensions, Savin and Yu showed that convexity of the domains is enough to get global $W^{2,\,p}$ regularity for any $p < \infty$ \cite{SY}. 
As for the case of non-convex source domain, in \cite{AC} the authors obtain global $C^{1,\alpha}$ estimates for potentials of optimal transport maps in any dimension when the densities are bounded between positive constants, the target domain is convex, and the source domain is a convex set with finitely many convex holes removed, using ideas from \cite{C1}. (Again, here $\alpha$ is small). Our methods (described below) are quite different from those in \cite{AC}, and the smoothness of the densities and the regularity properties of the holes play a delicate role in our analysis. We remark that our methods in fact apply near any ``uniformly concave" part of the boundary of a general smooth planar source domain.

Our strategy is as follows. First, we may focus our attention on a neighborhood of the holes in $\Omega_1$ (the ``concave part" of the boundary of $\Omega_1$), thanks to work of Savin-Yu which shows the $W^{2,\,p}$ regularity of $u$ (for any $p$) near the ``convex part" $\partial \Omega_0$ of the boundary of $\Omega_1$ \cite{SY}. We carefully analyze the geometry of the sections of $u$ (defined in Section \ref{Preliminaries}) which are centered at concave boundary points. We show that there are three possible cases, all of which are ``good." The first case is that the complement of $\Omega_1$ fills only a tiny fraction of the section. In this case, morally speaking $u$ solves $\det D^2u = 1$ in the whole section and we can control section geometry at smaller scales using the regularity theory for the Monge-Amp\`{e}re equation. The second case is that the complement of $\Omega_1$ fills a positive universal fraction of the section and the long axis of the section is transversal to the boundary. In this case we show that renormalization by an affine transformation flattens the boundary, and we are again in a good situation where section geometry can be controlled at smaller scales using Pogorelov-type estimates for the Monge-Amp\`{e}re equation. The last case is that the complement of $\Omega_1$ fills a positive universal fraction of the section, and the long axis of the section is roughly tangent to the boundary. In this case we are in a situation that resembles what happens at every inner boundary point for the model example, which has the desired regularity properties. Our analysis near the holes is valid for any solution of the Monge-Amp\`{e}re equation, and does not use the convexity of $\Omega_2$.

In the course of the proof we also prove a new interior second derivative estimate for solutions to $\det D^2w = \chi_{\{x_2 > 0\}}$, which is special to two dimensions. The classical Pogorelov estimate bounds the tangential second derivative $w_{11}$. Although this suffices for our application, using the partial Legendre transform we are also able to bound the ratio $|w_{12}|/w_{11}$ from above (see Proposition \ref{Pogorelov2}). As a result, the sections of $w$ centered on $\{x_2 = 0\}$ are well-approximated by ellipsoids whose axes are aligned with the coordinate axes. This result simplifies our proof, and may be useful for future applications.

The paper is organized as follows. In Section \ref{Preliminaries} we discuss some preliminary results about the geometry of centered sections, as well as some Pogorelov-type estimates (including the one mentioned in the previous paragraph). In Section \ref{KeyLemmas} we prove several key lemmas, corresponding to the three scenarios mentioned above. In Section \ref{ProofMain} we prove Theorem \ref{Main}. In Section \ref{FutureDirections} we discuss some future directions. Finally, in the appendix Section \ref{Appendix} we prove some of the preliminary results.

\section*{Acknowledgments}
The authors gratefully acknowledge the support of NSF CAREER grant DMS-2143668, an Alfred P. Sloan Research Fellowship, and a UC Irvine Chancellor's Fellowship. C. Mooney would like to thank A. Figalli for discussions on a related problem which led to Proposition \ref{Pogorelov2}.


\section{Preliminaries}\label{Preliminaries}

For the remainder of the paper we fix a constant $\delta > 0$. We let $\mathcal{F}_{\delta}$ denote the space of convex functions on $\mathbb{R}^2$ that satisfy (\ref{MA}) and (\ref{BoundaryCondition}), where $\Omega_1$ and $\Omega_2$ have unit area and are contained in  $B_{\delta^{-1}}$, $\Omega_2$ is convex, and the source domain $\Omega_1$ consists of a convex domain $\Omega_0$ with convex holes removed, where the holes are separated a distance at least $\delta$ from one another and from the boundary of $\Omega_0$, and the boundaries of the holes have lower and upper bounds $\delta,\,\delta^{-1}$ on their curvature. (Here $\Omega_i$ are not fixed, they are any domains satisfying the above conditions). We note that $\mathcal{F}_{\delta}$ is a compact family, namely, any sequence in $\mathcal{F}_{\delta}$ contains (after possibly adding constants) a subsequence that converges locally uniformly on $\mathbb{R}^2$ to a function in $\mathcal{F}_{\delta}$. The local uniform convergence follows from the fact that the gradients lie in $B_{\delta^{-1}}$ and the Arzela-Ascoli theorem. The fact that the limit lies in $\mathcal{F}_{\delta}$ uses the weak convergence of Monge-Amp\`{e}re measures under local uniform convergence \cite{Gut}. We call constants depending only on $\delta$ universal, and we say that positive quantities $a$ and $b$ satisfy $a \sim b$ if their ratio is trapped between positive universal constants. Below $c$ will denote a small positive universal constant which may change from line to line.

Let $u \in \mathcal{F}_{\delta}$. For any $x \in \overline\Omega_1$ and $h > 0$ there exists an affine function $L_{x,h}$ such that 
$$L_{x,h}(x) = u(x) + h$$ 
and such that the set $\{u < L_{x,h}\}$ is bounded and has center of mass $x$ (see \cite{C4}). We call $\{u < L_{x,h}\}$ the centered section of height $h$ at $x$, and we denote it by $S_h^u(x)$. 
One can show that $u$ is not linear when restricted to any line segment centered at a point in $\overline{\Omega}_1$ (see Appendix). Combined with a compactness argument, this shows that there exists a universal modulus of continuity $\omega$ such that for any $x \in \overline{\Omega_1}$ and any $h < 1$,
\begin{equation}\label{DiamBound}
\text{diam}(S_h^u(x)) \leq \omega(h).
\end{equation}
In particular, for $h < c_0$ universal, we have that $S_h^u(x)$ intersects at most one connected component of $\Omega_1^c$ for any $x \in \overline{\Omega_1}$. Below we will always assume that $h \in (0,\,c_0)$, and we will only consider sections centered in $\overline{\Omega_1}$ that are either contained in $\Omega_1$ or intersect a hole in $\Omega_1$.

Such centered sections satisfy the area estimate
\begin{equation}\label{Area}
|S_h^u(x)| \sim h.
\end{equation}
This estimate follows from the universal positive density of $\Omega_1$ in such sections. By a version of John's Lemma, there exist rectangles $R_h(x)$ centered at $0$ of area $4h$ such that
\begin{equation}\label{SectionTrapping}
x + cR_h(x) \subset S_h^u(x) \subset x + c^{-1}R_h(x).
\end{equation}
We denote the short and long side lengths of $R_h(x)$ by $2\lambda_h(x)$ and $2\Lambda_h(x),$ respectively, and we define the eccentricity of $R_h(x)$ by the quantity
$$\eta_h(x) = \frac{\Lambda_h(x)}{\lambda_h(x)}.$$

Finally, we have the following engulfing property (see Appendix), which allows us to compare sections in $\Omega_1$ tangent to a hole with a section centered on the boundary of the hole:
\begin{prop}\label{Engulfing}
If $u \in \mathcal{F}_{\delta},\, y \in \overline{S_h^u(x)} \cap (\partial\Omega_1 \backslash \partial \Omega_0)$ and $S_h^u(x) \subset \Omega_1$, then 
$$S_h^u(x) \subset y + R_{c^{-1}h}(y).$$
\end{prop}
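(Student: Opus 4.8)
The plan is to exploit the fact that $y$ lies on the boundary of a $C^{1,1}$ uniformly convex hole, so that near $y$ the set $\Omega_1^c$ contains a ball of universal radius tangent to $\partial\Omega_1$ at $y$, while $\Omega_1$ contains a cone-like region opening away from the hole. First I would set up coordinates so that $y = 0$, the inner normal to $\partial\Omega_1$ at $y$ points in the direction $e_2$, and $S_h^u(x)$ is comparable to a rectangle $R_h := R_h(x)$ centered at the center of mass $x$. Since $y \in \overline{S_h^u(x)}$ and $x + cR_h \subset S_h^u(x) \subset x + c^{-1}R_h$, the point $y=0$ and the center $x$ are within distance $c^{-1}\Lambda_h(x)$ of one another, and the section is contained in the slab-like box $x + c^{-1}R_h$ whose dimensions are $\Lambda_h(x)$ and $\lambda_h(x)$ up to universal constants. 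The key geometric input is that, because $S_h^u(x) \subset \Omega_1$ and $\Omega_1^c$ contains a ball $B_\rho(-\rho e_2)$ for a universal $\rho \sim \delta$ (the osculating ball of the hole at $y$), the section must avoid this ball; combined with the upper curvature bound it must also avoid a corresponding exterior neighborhood, which forces the section to lie essentially on one side of a curve that is $C^{1,1}$-close to $\{x_2 = 0\}$ near the origin.

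Next I would estimate the eccentricity and orientation of $R_h(x)$. Avoiding the osculating ball $B_\rho(-\rho e_2)$ while containing an ellipse of area $\sim h$ centered at $x$ with $0 \in \overline{S_h^u(x)}$ forces the short axis of $R_h(x)$ to be roughly aligned with $e_2$ (the normal direction) up to a universal angle, and it forces $\lambda_h(x) \lesssim \max\{h, \Lambda_h(x)^2\}$ by a chord-versus-sagitta computation against the ball of radius $\rho$: a region of height $\lambda$ and width $\Lambda$ sitting outside a ball of radius $\rho \sim \delta$ must satisfy $\lambda \lesssim \Lambda^2/\rho$ when $\Lambda$ is small, i.e. when $h$ is small. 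From this one reads off that $\Lambda_h(x) \gtrsim \sqrt{h}$ and hence, via $\lambda_h(x)\Lambda_h(x) \sim h$, that $\lambda_h(x) \lesssim \sqrt{h}$ as well, with both comparable to $\sqrt h$ only in the degenerate tangential case; in all cases $\Lambda_h(x) \lesssim \omega(h)$ by the diameter bound \eqref{DiamBound} and $\lambda_h(x) \gtrsim h/\omega(h)$ by the area estimate \eqref{Area}.

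Now I would compare with the centered section $S_h^u(y)$ of the \emph{same} height $h$ at $y$, whose trapping rectangle is $R_h(y)$. The engulfing-type estimates for Monge–Amp\`ere sections (the classical Caffarelli–Guti\'errez theory, valid since $\det D^2 u = 1$ holds in the interior where the relevant sections live, and extended to boundary-centered sections by the John's lemma normalization in \cite{C4}) say that two centered sections of comparable height at nearby points are comparable after affine renormalization; quantitatively, $S_h^u(x) \subset y + C R_{C h}(y)$ for a universal $C$, which is exactly the claim with $c^{-1} = C$. To make this rigorous I would renormalize $S_h^u(y)$ by the affine map $A$ sending $y + R_h(y)$ to the unit ball, apply the interior Pogorelov/Caffarelli estimates to the normalized solution (using that the normalized source still has universal positive density and the normalized hole boundary is still uniformly convex after the renormalization, by the eccentricity control just established), and transfer back. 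The main obstacle is the last step: controlling how the affine normalization at $y$ distorts the osculating-ball geometry of the hole — one must rule out that $R_h(y)$ itself is wildly eccentric with long axis pointing into the hole, which again comes down to the chord-sagitta estimate above showing that any centered section at a boundary point of a uniformly convex hole has short axis within a universal angle of the inner normal and $\lambda \sim h/\Lambda \gtrsim h/\omega(h)$. Once that orientation-and-eccentricity dichotomy is in hand for both $x$ and $y$, the inclusion follows by a standard covering/normalization argument, and the details are the kind of routine computation I would defer to the Appendix as the statement already indicates.
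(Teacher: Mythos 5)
Your proposal takes a genuinely different route from the paper, but it contains a gap that is not a matter of routine detail.

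The paper proves Proposition \ref{Engulfing} by compactness, not by quoting a pre-existing engulfing theorem. It decomposes the claim into two statements (Lemmas \ref{Engulfing0} and \ref{Engulfing1} in the Appendix), and proves each by contradiction: if the conclusion failed along a sequence $u_k \in \mathcal{F}_\delta$, one renormalizes in the appropriate centered section, passes to a locally uniform limit $w$ whose Monge--Amp\`ere measure dominates $\chi_{\{x_2>0\}}$ (after a rigid motion), and obtains that $w$ is linear along a segment reaching $\partial S_1^w(0)$. This contradicts Lemma \ref{Segment1}, the Liouville-type ingredient on which the whole Appendix rests. In other words, the engulfing statement is \emph{derived} from a degenerate Pogorelov-style barrier argument, not imported.

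The gap in your proposal is the step where you write that the inclusion $S_h^u(x) \subset y + C R_{Ch}(y)$ is given by ``the classical Caffarelli--Guti\'errez theory, valid since $\det D^2 u = 1$ holds in the interior where the relevant sections live, and extended to boundary-centered sections by the John's lemma normalization in \cite{C4}.'' This is precisely where a new argument is required. The section $S_{Ch}^u(y)$ is centered at a boundary point of a hole, so it necessarily protrudes into $\Omega_1^c$ where $\det D^2 u = 0$; the Monge--Amp\`ere measure is $\chi_{\Omega_1}\,dx$, which is degenerate, and the relevant sections do \emph{not} live inside $\{\det D^2 u = 1\}$. Moreover, the boundary-centered section theory in \cite{C4} is built on duality and the convexity of \emph{both} domains, which is exactly what fails at a concave boundary point of $\Omega_1$ --- the ambient set-up of this paper was chosen specifically because the standard boundary theory does not apply there. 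The doubling-type input (positive universal density of $\Omega_1$ in centered sections, giving $|S_h^u| \sim h$) is indeed available and is stated as \eqref{Area}, but deducing an engulfing statement at a concave boundary point from this density bound is not a citation; it is the content of the proposition. Your geometric preliminaries (osculating ball, orientation of $R_h(x)$ roughly tangential, eccentricity control) are plausible and in the same spirit as the geometry that the paper exploits implicitly when identifying the rescaled limit with the setting of Lemma \ref{Segment1}, but on their own they control the shape of each rectangle rather than produce the quantitative containment of $S_h^u(x)$ in a section at $y$. To close the argument along your lines you would essentially have to reprove an engulfing lemma for centered sections of the degenerate measure $\chi_{\Omega_1}$ near a concave boundary point, which is what the paper's compactness argument accomplishes.

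A smaller issue: the chord-versus-sagitta computation as you state it yields $\lambda_h \lesssim \Lambda_h^2/\rho$, and combined with $\lambda_h\Lambda_h \sim h$ this gives $\Lambda_h \gtrsim (h\rho)^{1/3}$, not $\Lambda_h \gtrsim \sqrt{h}$; the bound $\Lambda_h \gtrsim \sqrt{h}$ actually follows trivially from $\lambda_h \le \Lambda_h$ and $\lambda_h\Lambda_h \sim h$ without any geometry, so the sagitta argument is not doing the work you attribute to it there. That bound is not the crux of the matter, but it signals that the geometric part of the argument is not yet pinned down.
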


We now state some Pogorelov-type estimates. We say that a convex domain $\Omega$ is normalized if $B_c \subset \Omega \subset B_{c^{-1}}$ for $c > 0$ universal. The first result is Pogorelov's interior $C^2$ estimate (see e.g. \cite{Gut}):

\begin{prop}\label{Pogorelov1}
If $\det D^2w = 1$ in $S_1^w(0)$ and $S_1^w(0)$ is normalized, then $|D^2w| < c^{-1}$ in $\frac{1}{2}S_{1}^w(0)$ and
$$B_{ch^{1/2}} \subset S_h^w(0) \subset B_{c^{-1}h^{1/2}}$$
for all $h < 1$ and some $c > 0$ universal.
\end{prop}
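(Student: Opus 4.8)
The plan is to reduce to the standard setting for Pogorelov's estimate, recall the classical auxiliary-function argument to get an interior Hessian bound, and then read off the nested-ball estimate from that bound. First I would subtract the affine function $L_{0,1}$ from $w$ so that $\Omega := S_1^w(0)$ becomes a normalized convex domain, $w=0$ on $\partial\Omega$, $w<0$ in $\Omega$, and still $\det D^2w=1$. Since $\Omega$ is normalized, Caffarelli's interior regularity theory (see \cite{Gut}) shows $w$ is strictly convex and $C^{1,\alpha}_{\mathrm{loc}}$ in $\Omega$, and comparison with paraboloids $\tfrac12(|x|^2-R^2)$ with $R\sim1$, together with convexity, gives $-c^{-1}\le w\le0$ on $\Omega$ and $-w\ge c$ on $\tfrac12\Omega$. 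In particular $0$ is an interior point at which $w$ is differentiable.

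Next I would run Pogorelov's argument (first for smooth $w$, then passing to Alexandrov solutions by approximation). For a unit vector $\xi$, set $\phi_\xi=(-w)\,w_{\xi\xi}\,e^{\frac12(\nabla w\cdot\xi)^2}$; this vanishes on $\partial\Omega$, hence attains a positive interior maximum at some $x_0$. Writing $\nabla\log\phi_\xi=0$ and $D^2\log\phi_\xi\le0$ at $x_0$, contracting the second with the inverse Hessian $w^{ij}$, and inserting the identities $w^{ij}w_{ij\xi}=0$ and $w^{ij}w_{ij\xi\xi}=w^{is}w^{jt}w_{st\xi}w_{ij\xi}\ge0$ that come from differentiating $\log\det D^2w=0$ twice in the direction $\xi$, one arrives after the standard computation at $(-w)\,w_{\xi\xi}\le C$ at $x_0$, hence $\phi_\xi\le C$ on $\Omega$. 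Combined with $-w\ge c$ on $\tfrac12\Omega$ and letting $\xi$ vary, this yields $w_{\xi\xi}\le c^{-1}$ on $\tfrac12\Omega$; since $\det D^2w=1$ fixes the product of the eigenvalues of $D^2w$ at $1$, a uniform upper bound on the eigenvalues forces a uniform lower bound, so $c\,I\le D^2w\le c^{-1}I$ on $\tfrac12 S_1^w(0)$, which is the first assertion.

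Finally I would deduce the nested-ball estimate. For $h$ bounded below by a universal $h_0$, the inclusions $B_{ch}\subset S_h^w(0)\subset S_1^w(0)\subset B_{c^{-1}}$ (the first from $|S_h^w(0)|\sim h$ and convexity) already give the claim after adjusting constants. For $h<h_0$, a short argument using the Hessian pinching on $\tfrac12 S_1^w(0)$ and the centroid condition defining $L_{0,h}$ shows $S_h^w(0)\subset\tfrac12 S_1^w(0)$ once $h_0$ is universally small; there the bound $c\,I\le D^2w\le c^{-1}I$ traps $w$ between the paraboloids $\ell_0+\tfrac c2|x|^2$ and $\ell_0+\tfrac{c^{-1}}2|x|^2$ centered at $0$ (with $\ell_0$ the tangent plane of $w$ at $0$), so $\{w<\ell_0+h\}$ lies between $B_{ch^{1/2}}$ and $B_{c^{-1}h^{1/2}}$; since this set is comparable to a ball it differs from the genuinely centered section $S_h^w(0)$ only by universal factors, giving $B_{ch^{1/2}}\subset S_h^w(0)\subset B_{c^{-1}h^{1/2}}$. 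I expect the only genuinely delicate point to be the differential-inequality computation at $x_0$ in the Pogorelov step; it is classical (see \cite{Gut}) but must be done carefully to confirm the constants are universal, while the rest is bookkeeping with the comparison principle, John's lemma, and the area estimate (\ref{Area}).
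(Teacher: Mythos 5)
The paper itself does not supply a proof of Proposition \ref{Pogorelov1}: it is stated as ``Pogorelov's interior $C^2$ estimate (see e.g. \cite{Gut})'' and used as a black box. So there is no in-paper argument to compare against; your task was really to reconstruct the classical proof, and you have done that essentially correctly. The plan --- normalize so that $w=0$ on $\partial S_1^w(0)$, get the barrier bounds $-c^{-1}\le w\le 0$ and $-w\ge c$ on $\tfrac12 S_1^w(0)$, run Pogorelov's maximum-principle argument with the auxiliary function $(-w)\,w_{\xi\xi}e^{\frac12(\nabla w\cdot\xi)^2}$ (first for smooth approximations, then pass to the limit), and in two dimensions leverage $\lambda_1\lambda_2=1$ to turn the upper Hessian bound into a two-sided pinching $cI\le D^2w\le c^{-1}I$ --- is exactly the standard route, and the identities from differentiating $\log\det D^2w=0$ are stated correctly.

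Two points in the final step are fuzzier than the rest and worth spelling out carefully if you write this up in full. First, to apply the paraboloid trapping you need $S_h^w(0)\subset\tfrac12 S_1^w(0)$ for $h<h_0$; the cleanest mechanism is not really ``Hessian pinching plus the centroid condition'' as you wrote, but rather quantitative strict convexity (from Caffarelli's interior theory together with a compactness argument over normalized solutions), which forces $\mathrm{diam}\,S_h^w(0)\to 0$ uniformly as $h\to 0$. Alternatively, one can argue by continuity: let $h_1$ be the largest height for which $S_{h_1}^w(0)\subset\tfrac12 S_1^w(0)$; inside $\tfrac12 S_1^w(0)$ the paraboloid trapping puts $S_{h_1}^w(0)$ inside $B_{c^{-1}h_1^{1/2}}$ and makes it contain $B_{ch_1^{1/2}}$, and the tangency with $\partial(\tfrac12 S_1^w(0))$ then pins $h_1\sim 1$, which is the universal $h_0$ you need. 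Second, the comparison between the tangent section $\{w<\ell_0+h\}$ and the centered section $S_h^w(0)=\{w<L_{0,h}\}$ deserves a sentence: writing $L_{0,h}=\ell_0+h+q\cdot x$, the paraboloid trapping shows that if $|q|\gg h^{1/2}$ then $S_h^w(0)$ would be strongly lopsided and its centroid could not be at $0$, so $|q|\lesssim h^{1/2}$ and the two sets are comparable. Neither of these is a genuine gap --- they are standard and your outline points at the right ideas --- but they are the places where ``the rest is bookkeeping'' glosses over the parts that actually need a written argument.
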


Combining Proposition \ref{Pogorelov1} with the affine invariance of the Monge-Amp\`{e}re equation and the area estimate (\ref{Area}) we have
\begin{equation}\label{HessianBound}
|D^2u(x)| \sim \eta_h(x)
\end{equation}
whenever $S_h^u(x) \subset \Omega_1$.

The next estimate is a variant of Pogorelov's interior $C^2$ estimate with flat boundary, which to our knowledge is new:

\begin{prop}\label{Pogorelov2} 
If $\det D^2w = \chi_{\{x_2 > 0\}}$ in $S_1^w(0)$ and $S_1^w(0)$ is normalized, then
$$\sup_{\frac{1}{2}S_1^w(0) \cap \{x_2 > 0\}} \left(w_{11} + \frac{|w_{12}|}{w_{11}}\right) \leq c^{-1}$$
for some universal $c > 0$.
\end{prop}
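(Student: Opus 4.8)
The plan is to combine the classical Pogorelov estimate, which handles the tangential second derivative $w_{11}$, with an analysis of the \emph{partial Legendre transform} of $w$ in the $x_2$ variable to control the ratio $|w_{12}|/w_{11}$. First I would recall that since $\det D^2 w = \chi_{\{x_2 > 0\}}$ and $S_1^w(0)$ is normalized, the standard Pogorelov computation (applied to the quantity $\tfrac12\log w_{11} + \tfrac12 |\nabla w|^2 + \log(L - w)$ on the open set $\{x_2 > 0\}$, where $L$ is the affine function defining the section) yields $w_{11} \leq c^{-1}$ in $\tfrac12 S_1^w(0) \cap \{x_2 > 0\}$; here the flatness of the boundary $\{x_2 = 0\}$ is exactly what makes the boundary term in the maximum principle have the right sign, since $w_{11}$ restricted to $\{x_2 = 0\}$ is comparable to a one-dimensional quantity controlled by the normalization. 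One also gets a lower bound $w_{11} \geq c > 0$ on that set by applying the same reasoning after a rotation, or by noting $\det D^2 w = w_{11} w_{22} - w_{12}^2 = 1$ there together with an upper bound on $w_{22}$ obtained symmetrically. So the whole eigenvalue structure of $D^2 w$ in the interior half-section is pinned down \emph{except} possibly for a large off-diagonal tilt, which is what remains.

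To control $|w_{12}|/w_{11}$, I would introduce the partial Legendre transform $w^*(x_1, p_2)$ obtained by Legendre-transforming in $x_2$ only: set $p_2 = w_{x_2}(x_1, x_2)$ and $w^*(x_1,p_2) = x_2 p_2 - w(x_1,x_2)$. A classical computation shows that on $\{x_2 > 0\}$, where $\det D^2 w = 1$, the function $w^*$ solves a \emph{linear} equation — namely $w^*_{x_1 x_1} - w^*_{p_2 p_2} = 0$, the wave equation — and moreover the second derivatives of $w$ and $w^*$ are related by
\[
w^*_{p_2 p_2} = \frac{1}{w_{22}}, \qquad w^*_{x_1 p_2} = -\frac{w_{12}}{w_{22}}, \qquad w^*_{x_1 x_1} = w_{11} - \frac{w_{12}^2}{w_{22}} = \frac{1}{w_{22}} = w^*_{p_2 p_2},
\]
using $\det D^2 w = 1$. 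Hence $|w_{12}|/w_{11} = |w^*_{x_1 p_2}| \big/ w^*_{x_1 x_1} \cdot (w_{11} w_{22}/(w_{11} w_{22})) $; more cleanly, $w_{12}/w_{11} = -w^*_{x_1 p_2}/w^*_{p_2 p_2}\cdot (1/w_{11} w_{22}) \cdot$ — rather, the clean identity is $|w_{12}|/w_{11} = w_{22}|w^*_{x_1 p_2}|/(w_{11} w_{22}) = |w^*_{x_1 p_2}|/w_{11}$, and since $w_{11} \sim 1$ it suffices to bound $|w^*_{x_1 p_2}|$, i.e. a first derivative of the gradient of $w^*$. Because $\nabla w^*$ satisfies a linear constant-coefficient system (each component is harmonic-like: $w^*$ solves the wave equation, so $w^*_{x_1}$ and $w^*_{p_2}$ do too), interior gradient bounds for $w^*$ on a slightly smaller region follow from $C^0$ bounds on $\nabla w^*$, which in turn follow from the already-established two-sided bounds $w_{11} \sim 1$, $w_{22} \sim 1$ (equivalently $w^*_{p_2 p_2} \sim 1$) and the geometry of the image of the half-section under the partial transform.

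The main obstacle I anticipate is \textbf{controlling the domain of the partial Legendre transform and transferring estimates across the free boundary $\{x_2 = 0\}$}. The partial transform is only defined on $\{x_2 > 0\}$, where the equation is $\det D^2 w = 1$; one must check that the image domain in $(x_1, p_2)$ coordinates is comparable to a fixed rectangle (so that one can apply interior estimates for the wave/linear equation on a definite sub-rectangle), and that the portion of $\partial S_1^w(0)$ lying on $\{x_2 = 0\}$ maps to a controlled piece of the boundary — this is where one needs the behavior of $w_{22}$, hence the lower Pogorelov bound, near $\{x_2=0\}$, together with the normalization of the section. A secondary technical point is justifying the computations at the level of regularity available: $w$ is smooth in $\{x_2 > 0\}$ by interior regularity for $\det D^2 w = 1$, so all the identities above are classical there, and one only needs care in that the estimates are stated up to $\frac12 S_1^w(0) \cap \{x_2 > 0\}$, i.e. they are genuinely interior on the $\{x_2 > 0\}$ side and do not require regularity of $w$ across $\{x_2 = 0\}$. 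Once the image domain is pinned down, the bound on $|w^*_{x_1 p_2}|$ is a routine interior estimate for a linear equation, and combining it with $w_{11} \sim 1$ finishes the proof.
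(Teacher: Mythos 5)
Your overall plan---linearize via a partial Legendre transform and run linear regularity for the resulting equation---is the same as the paper's, but several of the details are wrong in ways that break the argument.

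First, the partial Legendre transform of a solution of $\det D^2w=1$ satisfies the \emph{Laplace} equation, not the wave equation. With your convention (transforming in $x_2$), one gets $w^*_{x_1x_1}=-\det D^2w/w_{22}=-1/w_{22}=-w^*_{p_2p_2}$, i.e.\ $w^*_{x_1x_1}+w^*_{p_2p_2}=0$; you have a sign slip that turned Laplace into wave. This matters, because the step where you ``bound $|w^*_{x_1p_2}|$ by routine interior estimates for a linear equation'' has no analogue for the wave equation, which does not smooth in the interior. Second, your final reduction invokes a lower bound $w_{11}\geq c>0$ on $\tfrac12 S_1^w(0)\cap\{x_2>0\}$, which is false: in the model example $w_{11}\to 0$ at the interface, and this persists after renormalization. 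The proposition only asserts control of the \emph{ratio} $|w_{12}|/w_{11}$, precisely because $w_{11}$ is not bounded below near $\{x_2=0\}$. Relatedly, your ``clean identity'' $|w_{12}|/w_{11}=|w^*_{x_1p_2}|/w_{11}$ is off by a factor of $w_{22}$ (it should read $|w_{12}|/w_{11}=w_{22}\,|w^*_{x_1p_2}|/w_{11}$), and $w_{22}/w_{11}$ also blows up at the interface, so that route does not close.

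The deeper gap, which you yourself flag as ``the main obstacle'' and then set aside, is that the estimate is required on $\tfrac12 S_1^w(0)\cap\{x_2>0\}$, which reaches arbitrarily close to the interface $\{x_2=0\}$; interior estimates on $\{x_2>0\}$ cannot reach there, so the estimate is really a \emph{boundary} regularity statement at $\{x_2=0\}$ and cannot be ``genuinely interior.'' The paper addresses exactly this point: it Legendre-transforms in $x_1$ (not $x_2$), so the transform is defined across $\{x_2=0\}$ and satisfies $\chi_{\{x_2>0\}}w^*_{11}+w^*_{22}=0$ globally. Then $w^*$ is linear on vertical segments in $\{x_2<0\}$, which yields a Lipschitz bound for $w^*_2$ on $\{x_2=0\}$ by a finite-difference argument; since $w^*_2$ is harmonic in $\{x_2>0\}$, boundary Schauder-type regularity for harmonic functions gives $|w^*_{12}|\le c^{-1}$ up to $\{x_2=0\}$, and the identity $w_{12}=-w^*_{12}w_{11}$ (exact for the $x_1$-transform, with no spurious $w_{22}$ factor) finishes. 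To salvage your proposal you would need to (i) fix the sign to get Laplace, (ii) switch to the $x_1$-transform so that the transform extends past $\{x_2=0\}$ and the ratio $|w_{12}|/w_{11}$ equals $|w^*_{12}|$ exactly, and (iii) carry out the boundary regularity step at $\{x_2=0\}$ via the degenerate structure on $\{x_2<0\}$, rather than relying on a false lower bound for $w_{11}$.
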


\noindent The upper bound on $w_{11}$ is the classical Pogorelov estimate (see \cite{C4}) and doesn't use that we are working in the plane. The upper bound on $|w_{12}|/w_{11}$ uses that we are in the plane.

\begin{proof}
We may assume after subtracting a linear function that $w|_{\partial S_1^w(0)} = 0$.
Let $w^*$ denote the partial Legendre transform of $w$ (see e.g. \cite{DS} for the definition and properties), which is convex in the first variable, concave in the second, and formally solves
$$\chi_{\{x_2 > 0\}}w^*_{11} + w^*_{22} = 0$$
in $B_{c}(0)$ for $c > 0$ universal. More precisely, $w^*$ is harmonic in $\{x_2 > 0\},$ linear on vertical segments in $\{x_2 < 0\},$ and moreover $w^*_2$ has the same limit from above and below on $\{x_2 = 0\}$ along vertical lines. It is not hard to verify the first two properties by approximating $\chi_{\{x_2 > 0\}}$ with smooth positive functions of $x_2$. The third property can be verified using that $w \in C^1$ (\cite{A}, \cite{FL}). Since $w^*$ is linear on vertical segments in $\{x_2 \leq 0\}$ we have
$$w^*_2(x_1,\,0) = a^{-1}(w^*(x_1,\,0) - w^*(x_1,\,-a))$$
for any $a > 0$. Choosing $a \sim 1$ and using that $w^*$ is convex (hence locally Lipschitz with locally universally bounded Lipschitz constant) in the horizontal directions, we conclude that $w^*_2$ is Lipschitz on $\{x_2 = 0\}$. In particular, $w^*_2$ is harmonic in $B_c \cap \{x_2 > 0\}$ and Lipschitz on $\{x_2 = 0\}$ (with locally universally bounded Lipschitz constant). It follows from harmonic function theory that $|w^*_{12}| < c^{-1}$ in $B_{c/2} \cap \{x_2 > 0\}$. Using the relation
$$w_{12} = -w^*_{12}w_{11}$$
we obtain the desired estimate on $|w_{12}|/w_{11}$.
\end{proof}

As a result of Proposition \ref{Pogorelov2}, under the same assumptions we can say that $S_h^w(0)$ is approximated by (contains and is contained in dilations by universal constants of) a rectangle with axes that are aligned with the coordinate axes for all $h < 1$. Indeed, if not, then $S_h^w(0)$ is approximated by an ellipsoid of the form $A_hB_1$, where
$$A_h(x_1,\,x_2) = \left(A\sqrt{h}(x_1 + Kx_2),\, A^{-1}\sqrt{h}x_2\right),$$
$A \geq c$ (this follows from the upper bound on $w_{11}$), and $|K| >> 1$. The function
$$v(x) = \frac{1}{h}w(A_hx)$$
satisfies the conditions of Proposition \ref{Pogorelov2}, and
$$w_{12} = v_{12} - Kv_{11}.$$
In $B_c(c e_2)$ we can find points where $v_{11} \sim 1$ and $|v_{12}| < c^{-1}$ to arrive at a contradiction of Proposition \ref{Pogorelov2} when $|K|$ is sufficiently large.

\section{Key Lemmas}\label{KeyLemmas}
In this section we prove some lemmas that control the geometry of sections centered at concave boundary points in various scenarios. We will use several times below the standard fact that if $w_k$ are convex functions with $S_1^{w_k}(0)$ normalized, $w_k|_{\partial S_1^{w_k}(0)} = 0$ and $\det D^2w_k$ are uniformly bounded above, then a subsequence of $w_k$ converges uniformly to a convex function $w$ satisfying the same properties, and the Monge-Amp\`{e}re measures converge weakly to that of the limit.

The first lemma deals with the case that $\Omega_1^c$ bites only a small fraction of the section:
\begin{lem}\label{SmallFrac}
For all $M > 1$, there exists $\epsilon > 0$ depending on $\delta$ and $M$ such that if $x \in \partial \Omega_1 \backslash \partial \Omega_0$ and
$$\frac{|(x + R_h(x)) \cap \Omega_1^c|}{|R_h(x)|} \leq \epsilon,$$
then
$$\eta_{h/M}(x) \leq c_1^{-1}\eta_h(x)$$
and
$$R_{h/M}(x) \subset c_1^{-1}M^{-1/2}R_{h}(x).$$
Here $c_1 > 0$ is a universal constant.
\end{lem}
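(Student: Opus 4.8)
The plan is to argue by compactness and contradiction. Suppose the first conclusion fails: there exist a sequence $u_k \in \mathcal{F}_\delta$, points $x_k \in \partial \Omega_1^{(k)} \setminus \partial \Omega_0^{(k)}$, heights $h_k$, and $\epsilon_k \to 0$ with
$$\frac{|(x_k + R_{h_k}(x_k)) \cap (\Omega_1^{(k)})^c|}{|R_{h_k}(x_k)|} \leq \epsilon_k, \qquad \eta_{h_k/M}(x_k) > k\,\eta_{h_k}(x_k).$$
First I would normalize: apply the affine map $T_k$ sending $x_k \mapsto 0$ and $x_k + R_{h_k}(x_k)$ to a normalized rectangle, and set $w_k = h_k^{-1}(u_k \circ T_k^{-1} - L_{x_k,h_k} \circ T_k^{-1})$. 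By \eqref{SectionTrapping} and \eqref{Area}, $S_1^{w_k}(0)$ is normalized (up to universal dilation one may arrange $S_1^{w_k}(0)$ itself normalized), $w_k|_{\partial S_1^{w_k}(0)} = 0$, and $\det D^2 w_k = \chi_{\Omega_1^{(k)}}\circ T_k^{-1} =: \chi_{E_k}$ with $|S_1^{w_k}(0)\setminus E_k| \to 0$ because the hole occupies a fraction $\lesssim \epsilon_k$ of the enclosing rectangle (using \eqref{SectionTrapping} to pass between the rectangle and the section, and that $S_1^{w_k}(0)$ has area $\sim 1$). Passing to a subsequence, $w_k \to w$ uniformly, with $w$ convex, $w|_{\partial S_1^w(0)} = 0$, $S_1^w(0)$ normalized, and by weak convergence of Monge–Ampère measures $\det D^2 w = 1$ in $S_1^w(0)$ (the defect sets $E_k^c$ carry vanishing mass).

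Now I invoke Proposition \ref{Pogorelov1}: since $\det D^2 w = 1$ and $S_1^w(0)$ is normalized, $B_{c h^{1/2}} \subset S_h^w(0) \subset B_{c^{-1} h^{1/2}}$ for all $h < 1$, so the limit has $\eta_h^w(0) \le c^{-2}$ bounded and $R_h^w(0) \subset c^{-1} h^{1/2} B_1$. The key point is then stability of centered sections under uniform convergence: for fixed $h < 1$, $S_h^{w_k}(0) \to S_h^w(0)$ in the Hausdorff sense, hence the eccentricities and enclosing rectangles converge, $\eta_{h}^{w_k}(0) \to \eta_h^w(0)$ and likewise at height $1/M$. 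Undoing the normalization, $\eta_h$ and the relation between $R_{h/M}$ and $R_h$ are affine-invariant ratios, so $\eta_{h_k/M}(x_k)/\eta_{h_k}(x_k) = \eta_{1/M}^{w_k}(0)/\eta_1^{w_k}(0) \to \eta_{1/M}^w(0)/\eta_1^w(0) \le c^{-2}$, contradicting the blow-up $> k$. This gives the first inequality with $c_1$ universal. The second conclusion, $R_{h/M}(x) \subset c_1^{-1} M^{-1/2} R_h(x)$, follows the same way: in the normalized picture $R_{1/M}^{w_k}(0) \to R_{1/M}^w(0) \subset c^{-1} M^{-1/2} B_1 \subset c^{-2} M^{-1/2} R_1^{w}(0)$ (using $B_c \subset R_1^w(0)$), and transporting back by $T_k$ preserves the inclusion.

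The main obstacle I expect is the continuity of centered sections under uniform convergence — specifically that $S_h^{w_k}(0) \to S_h^w(0)$ in Hausdorff distance, including convergence of the centering affine functions $L_{0,h}^{w_k}$. This requires knowing that centered sections of the limit are "stable": the height function $h \mapsto $ (section) has no jumps, which in turn uses that $w$ is not linear on any segment through $0$ — here guaranteed because $\det D^2 w = 1$ makes $w$ strictly convex (Caffarelli), so centered sections are uniquely defined and depend continuously on $h$. One must also check the family $\{w_k\}$ has a uniform lower bound on $\det D^2 w_k$ near $0$ — but this is not needed; only the upper bound and the vanishing defect are used, and strict convexity of the \emph{limit} suffices for section stability. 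A secondary technical point is ensuring $\epsilon$ can be chosen depending only on $\delta$ and $M$ (not on the sequence): this is automatic from the contradiction setup, since we assumed no such $\epsilon$ exists for the given $M$.
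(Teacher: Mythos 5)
Your proposal is correct and follows essentially the same route as the paper: a compactness/contradiction argument, rescaling so that the section is normalized, using weak convergence of Monge--Amp\`{e}re measures to get $\det D^2 w = 1$ in the limit, applying Proposition \ref{Pogorelov1}, and scaling back. The one small imprecision is the assertion that $\eta_{h/M}/\eta_h$ is an exact affine invariant (it is not, since the John rectangles at different heights need not be aligned); what one actually uses, as in the paper, is that $S_{1/M}^{w_k}(0)$ is trapped between balls of radius $\sim M^{-1/2}$ for large $k$, which after undoing the normalization (a diagonal map once the rectangle is axis-aligned) yields $\lambda_{h_k/M} \sim M^{-1/2}\lambda_{h_k}$, $\Lambda_{h_k/M} \sim M^{-1/2}\Lambda_{h_k}$ via Remark \ref{RectangleComparison}, giving both conclusions at once.
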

\begin{proof}
Assume by way of contradiction that the lemma is false. Then there is a sequence $u_k \in \mathcal{F}_{\delta}$, points $x_k$ on the boundaries of the holes in the source domains, and $h_k > 0$ such that the area fraction of the complements of the source domains in $x_k + R_{h_k}(x_k)$ tends to zero but the conclusions don't hold for $c_1$ to be determined. After performing a rigid motion we may assume that $x_k = 0$ and that that $R_{h_k}(0)$ have short side vertical. Then up to adding affine functions and taking a subsequence, the rescalings $w_k := h_k^{-1}u_k(\Lambda_{h_k}x_1,\,\lambda_{h_k}x_2)$ converge locally uniformly to a function $w$ that satisfies the conditions of Proposition \ref{Pogorelov1}. Applying the proposition we conclude that $B_{cM^{-1/2}} \subset S_{1/M}^{w_k}(0) \subset B_{c^{-1}M^{-1/2}}$ for $k$ large and $c$ universal. Scaling back we reach a contradiction provided $c_1 > 0$ was chosen small universal.
\end{proof}

\begin{rem}\label{RectangleComparison}
Here and below we use that if a rectangle $R_1$ centered at $0$ is approximated by (contains and is contained in dilations by universal constants times) another rectangle $R_2$ centered at $0$, then their side lengths $\lambda_i,\,\Lambda_i\, (i = 1,\,2)$ satisfy $\lambda_1 \sim \lambda_2,\, \Lambda_1 \sim \Lambda_2$. 
\end{rem}

We define
\begin{equation}\label{M1}
M_1 = c_1^{-6},
\end{equation}  
where $c_1$ is the universal constant in Lemma \ref{SmallFrac}, and we let $\epsilon_1$ be the corresponding volume fraction from the lemma.                

The next lemma deals with the case that $\Omega_1^c$ bites a positive fraction of the section, and the long axis is transversal to the boundary. We let $l_h(x)$ denote the length of the intersection of tangent line to $\partial \Omega_1$ at $x \in \partial \Omega_1$ with $x + R_h(x)$. This lemma uses the regularity and uniform convexity of the boundary.
\begin{lem}\label{SmallFrac2}
For all $M > 1$, there exists $\epsilon > 0$ depending on $\delta$ and $M$ such that if $x \in \partial \Omega_1 \backslash \partial \Omega_0$, and in addition
$$\eta_h(x) > c_2^{-1}M, \quad \frac{|(x + R_h(x)) \cap \Omega_1^c|}{|R_h(x)|} > \epsilon_1, \text{ and } \quad \frac{l_h(x)}{\Lambda_h(x)} \leq \epsilon,$$
then
$$\eta_{h/M}(x) \leq c_2^{-1}\eta_h(x).$$
Furthermore, if
$$\eta_{h/M}(x) > \eta_h(x),$$
then
$$R_{h/M}(x) \subset c_2^{-1}M^{-1/2}R_{h}(x).$$
Here $c_2 > 0$ is a universal constant.
\end{lem}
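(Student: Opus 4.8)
The plan is to argue by compactness, as in the proof of Lemma~\ref{SmallFrac}. Suppose the assertion fails for some fixed $M>1$. Then there are $u_k\in\mathcal F_\delta$ with source domains $\Omega_1^k$, points $x_k$ on the holes of $\Omega_1^k$, and heights $h_k\in(0,c_0)$ with
$$\eta_{h_k}(x_k)>c_2^{-1}M,\qquad |(x_k+R_{h_k}(x_k))\cap(\Omega_1^k)^c|>\epsilon_1|R_{h_k}(x_k)|,\qquad \frac{l_{h_k}(x_k)}{\Lambda_{h_k}(x_k)}\le\frac1k,$$
for which the two conclusions fail with a universal constant $c_2$ to be pinned down. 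After a rigid motion I take $x_k=0$ with the long side of $R_{h_k}(0)$ along $e_1$, and renormalize by the diagonal map $A_k=\mathrm{diag}\big(\Lambda_{h_k}(0),\lambda_{h_k}(0)\big)$. The functions $w_k(y)=h_k^{-1}u_k(A_ky)$, adjusted by a linear function so that $w_k|_{\partial S_1^{w_k}(0)}=0$, satisfy $\det D^2w_k=\chi_{A_k^{-1}\Omega_1^k}$, and $S_1^{w_k}(0)=A_k^{-1}S_{h_k}^{u_k}(0)$ is normalized by \eqref{SectionTrapping}.

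The first step is to identify the limit. Writing $\theta_k$ for the angle between the long axis of $R_{h_k}(0)$ and the tangent line to $\partial\Omega_1^k$ at $0$, the hypothesis $l_{h_k}(0)\le\Lambda_{h_k}(0)/k$ reads $\eta_{h_k}(0)|\sin\theta_k|\ge 2k$; in particular $\eta_{h_k}(0)\to\infty$, and since $\Lambda_{h_k}(0)$ is bounded by \eqref{DiamBound} and $\Lambda_{h_k}(0)\lambda_{h_k}(0)=h_k$, we get $h_k\to0$ and $\lambda_{h_k}(0)\to0$. The section meets a single hole $H_k$, whose boundary is $C^{1,1}$, uniformly convex with curvature $\sim 1$, so near $0$ it is the graph $s\mapsto s\tau_k+g_k(s)n_k$ over the tangent line (with unit tangent $\tau_k$ and normal $n_k$), $g_k(0)=g_k'(0)=0$, $|g_k''|\le\delta^{-1}$. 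Pushing this arc through $A_k^{-1}$, the portion staying in $S_1^{w_k}(0)\subset B_{c^{-1}}$ corresponds (reading off the second coordinate) to $|s|\lesssim\lambda_{h_k}(0)/|\sin\theta_k|$, which tends to $0$; hence $|g_k(s)|\lesssim\lambda_{h_k}(0)^2/\sin^2\theta_k$, and the first coordinate of the image is bounded by
$$\frac{|s|\,|\cos\theta_k|}{\Lambda_{h_k}(0)}+\frac{|g_k(s)|\,|\sin\theta_k|}{\Lambda_{h_k}(0)}\;\lesssim\;\frac{1}{\eta_{h_k}(0)|\sin\theta_k|}+\frac{\lambda_{h_k}(0)}{\eta_{h_k}(0)|\sin\theta_k|}\;\lesssim\;\frac{1+\lambda_{h_k}(0)}{k}\;\longrightarrow\;0.$$
So the rescaled boundary arc collapses onto $\{y_1=0\}$, and since $A_k^{-1}\Omega_1^k$ is convex with $0$ on its boundary and the uniform positive density of $\Omega_1^k$ in the sections rules out a vanishing limit, $\chi_{A_k^{-1}\Omega_1^k}\to\chi_{\{y_1<0\}}$ in $L^1_{\mathrm{loc}}$. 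Passing to a subsequence, $w_k\to w$ locally uniformly, with $\det D^2w=\chi_{\{y_1<0\}}$ in the normalized section $S_1^w(0)$ and $w|_{\partial S_1^w(0)}=0$.

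The second step is to apply the flat-boundary estimate to the limit. After rotating so $\{y_1<0\}$ becomes $\{x_2>0\}$, Proposition~\ref{Pogorelov2} and the remark following it apply to $w$; combined with the normalization of $S_1^w(0)$ — which is what excludes the eccentric, axis-aligned alternative — they give that every $S_h^w(0)$, $h<1$, is comparable to an axis-aligned square of side $\sim h^{1/2}$, so $B_{cM^{-1/2}}\subset S_{1/M}^w(0)\subset B_{c^{-1}M^{-1/2}}$. Finally I transfer back: uniform convergence gives the same two-sided inclusion for $S_{1/M}^{w_k}(0)$ with slightly worse constants once $k$ is large, and since $S_{h_k/M}^{u_k}(0)=A_kS_{1/M}^{w_k}(0)$ while $R_{h_k}(0)$ is comparable to $A_k([-1,1]^2)$, the rectangle $R_{h_k/M}^{u_k}(0)$ is comparable to $A_k\big(M^{-1/2}[-1,1]^2\big)$. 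Hence $\eta_{h_k/M}(0)\sim\eta_{h_k}(0)$ and $R_{h_k/M}(0)\subset CM^{-1/2}R_{h_k}(0)$ with $C$ universal; choosing $c_2$ small enough (depending only on $\delta$) contradicts the failure of both conclusions.

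The main obstacle is the flattening in the second paragraph: a priori $A_k^{-1}$ stretches the direction normal to the hole by the large factor $\lambda_{h_k}(0)^{-1}$, so it is not obvious the rescaled boundary stays flat. The resolution — and the reason transversality ($l_h/\Lambda_h$ small) is exactly the right hypothesis here — is that a transversal arc is a graph over the \emph{short} side of the rectangle, whose curvature in the rescaled coordinates is governed by $\lambda_{h_k}(0)^2/\big(\Lambda_{h_k}(0)|\sin\theta_k|\big)=\lambda_{h_k}(0)/\big(\eta_{h_k}(0)|\sin\theta_k|\big)\lesssim\lambda_{h_k}(0)/k\to0$. This is precisely where the two-sided curvature bound on the holes enters; without $C^{1,1}$-regularity and uniform convexity the rescaled boundary could retain curvature in the limit, placing one instead in the model-example (tangential) scenario handled by the remaining lemma.
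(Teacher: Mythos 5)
Your overall strategy (compactness, flattening the boundary, applying Proposition~\ref{Pogorelov2} to the limit) is the same as the paper's, and your explanation of how transversality and the $C^{1,1}$/uniform-convexity bounds interact to flatten the rescaled hole boundary is the right picture. However, there are two places where you claim more than the argument gives, and the second one is the reason the lemma's second conclusion is stated conditionally.

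First, the claim that $\chi_{A_k^{-1}\Omega_1^k}\to\chi_{\{y_1<0\}}$ does not follow from your computation. Your bound on the first coordinate applies only to the piece of the boundary arc with $|s|\lesssim\lambda_{h_k}(0)/|\sin\theta_k|$, i.e.\ the piece near the tangency point. Writing $s_k=\tan\theta_k$ for the slope, the rescaled parabola $\tilde x_2 = s_k\eta_k\tilde x_1 - c\,\Lambda_k^2\lambda_k^{-1}\tilde x_1^2$ crosses $\tilde x_2=0$ a second time at $\tilde x_1\sim s_k/\Lambda_k$. This second crossing is bounded below by a universal constant but need not tend to infinity, so the hole image may converge to a finite slab $\{0<\tilde x_1< L\}$ with $L\sim 1$ rather than to a half-plane, and the source density can reappear at $\tilde x_1>L$ inside the normalized section. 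The paper establishes precisely the slab structure and then invokes a ``small modification'' of Proposition~\ref{Pogorelov2}; your pure half-plane claim would need justification or the same qualification.

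Second, and more importantly, it is not true that every $S_h^w(0)$ is comparable to an axis-aligned \emph{square} of side $\sim h^{1/2}$. Proposition~\ref{Pogorelov2} and the remark following it give an axis-aligned rectangle, but they bound only the tangential second derivative $w_{22}$ from above and $|w_{12}|/w_{22}$; the normal second derivative $w_{11}$ has no upper bound (indeed $w_{11}w_{22}\ge 1$ forces it large whenever $w_{22}$ is small). Thus the rectangle approximating $S_h^w(0)$ has tangential side $L\gtrsim h^{1/2}$ and normal side $l\lesssim h^{1/2}$, but $l$ may be far smaller than $h^{1/2}$; equivalently $l\le c^{-1}L$ with no lower bound on $l/L$. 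The normalization of $S_1^w(0)$ does not propagate a square shape down to $S_h^w(0)$ for $h<1$ (a function can have $w_{22}(0)\ll 1$, hence $w_{11}(0)\gg 1$, while $S_1^w(0)$ is normalized). This distinction is exactly why the paper argues in two cases: if $l<cL$ the eccentricity strictly drops, so the hypothesis $\eta_{h/M}>\eta_h$ of the second conclusion is vacuous; only when $l\ge cL$ is $S_{1/M}^w(0)$ a ball and $R_{h/M}\subset c^{-1}M^{-1/2}R_h$ obtained. Your claimed unconditional inclusion $R_{h_k/M}(0)\subset CM^{-1/2}R_{h_k}(0)$ would be false when $L\gg l$, since then $\lambda_{h_k}L$ may far exceed $M^{-1/2}\lambda_{h_k}$. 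Your proof needs to be repaired by replacing the ``square'' claim with the weaker ``axis-aligned rectangle, $l\lesssim L$'' statement and then arguing the two cases as the paper does.
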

\begin{proof}
Assume by way of contradiction that the lemma is false. Then there is a sequence $u_k \in \mathcal{F}_{\delta}$, points $x_k$ on the boundaries of the holes in the source domains $\Omega_{1k}$, and $h_k > 0$ such that the first two inequalities hold for $R_{h_k}(x_k)$ and 
\begin{equation}\label{RatioContra}
l_{h_k}(x_k)/\Lambda_{h_k}(x_k) < 1/k,
\end{equation} 
but the conclusion is false for all $k$. After performing a rigid motion we may assume that $x_k = 0$, that $R_{h_k}(0)$ have short side vertical, and that $te_2$ and $se_1$ are contained in the source domain for $t \in (0,\,\lambda_{h_k}(0))$ and $s \in (-\Lambda_{h_k}(0),\,0)$. From hereon out we write $R_k = R_{h_k}(0)$, $\lambda_k = \lambda_{h_k}(0)$, and $\Lambda_k = \Lambda_{h_k}(0)$.

Let $s_k \geq 0$ be the slope of the tangent line to the boundary of the source domain at $0$ (we allow $s_k = \infty$). Inequality (\ref{RatioContra}) implies that 
\begin{equation}\label{RatioContra2}
s_k\Lambda_k\lambda_k^{-1} > k.
\end{equation}

We claim that Proposition \ref{Pogorelov2} applies to the limit of the sequence of rescalings $h_k^{-1} u_k(\Lambda_kx_1, \lambda_kx_2)$ (up to adding affine functions), after swapping $x_1$ and $x_2$ (see Figure \ref{Fig1}). If $s_k \geq 1$ we can conclude this by arguing as in the proof of Proposition $4.2$ in \cite{SY}, so assume otherwise. Uniform concavity of the boundary implies that $\{x_2 < s_kx_1 - cx_1^2\}$ contain the holes with $0$ in their boundary. Since 
$$R_k \cap \{x_2 < s_kx_1 - cx_1^2\} \subset R_k \cap \{|x_1| < c^{-1}(s_k^2 + \lambda_k)^{1/2}\},$$ 
the lower bound on the volume of the complement implies that
$$\Lambda_k^2 \leq c^{-1}(s_k^2 + \lambda_k).$$
We claim that $s_k^2 \geq \lambda_k$ for $k$ large. Indeed, if not, then the previous inequality implies that $\Lambda_k^2 \leq c^{-1}\lambda_k$, which combined with (\ref{RatioContra2}) gives $k < c^{-1}s_k\lambda_k^{-1/2},$ and since we assumed that $s_k\lambda_k^{-1/2} < 1$ we get a contradiction for $k$ large. We conclude that
\begin{equation}\label{LambdaBound}
\Lambda_k \leq c^{-1}s_k.
\end{equation}

By $C^{1,\,1}$ regularity of the holes, the complements of the domains contain $\{x_2 < s_kx_1 - c^{-1}x_1^2\} \cap R_k$. In new coordinates $\tilde{x}_1$ and $\tilde{x}_2$ where $x_1 = \Lambda_k\tilde{x}_1,\, x_2 = \lambda_k \tilde{x}_2$, the parabolic domains $\{x_2 < s_kx_1 - c^{-1}x_1^2\}$ become
$$\{\tilde{x}_2 < s_k\Lambda_k\lambda_k^{-1}\tilde{x}_1 - c^{-1}\Lambda_k^2\lambda_k^{-1}\tilde{x}_1^2\}.$$
Using the bound (\ref{LambdaBound}) on $\Lambda_k$, we see that these domains contain
$$\{\tilde{x}_2 < s_k\Lambda_k\lambda_k^{-1}(\tilde{x}_1 - c^{-1}\tilde{x}_1^2)\}.$$
By (\ref{RatioContra2}), the latter domains converge to the slab $\{0 < \tilde{x}_1 < c\}$ as $k$ tends to infinity.

\begin{figure}
 \begin{center}
    \includegraphics[scale=0.5, trim={0mm 17mm 0mm 0mm}, clip]{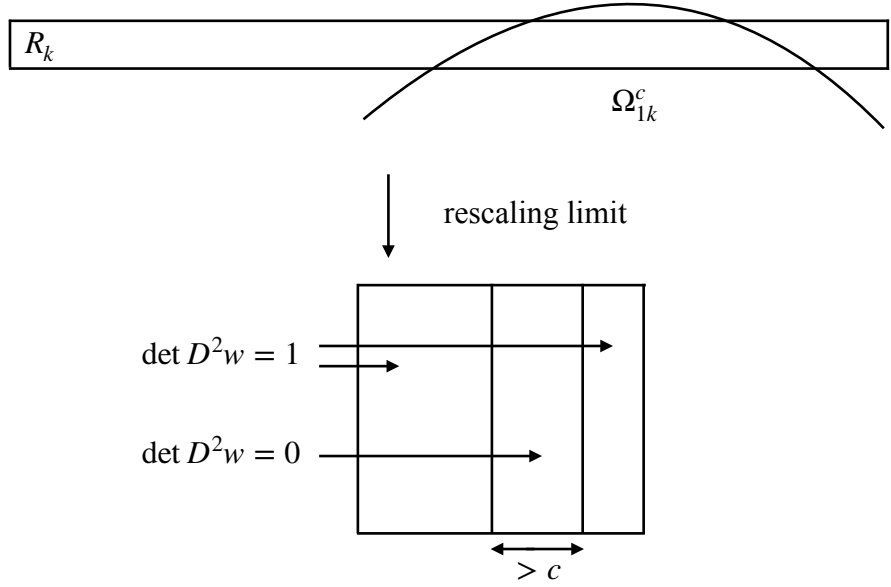}
\caption{Rescaling limit in the case of nontrivial exterior area and transversal boundary}
\label{Fig1}
\end{center}
\end{figure}

We conclude that a subsequence of $h_k^{-1} u_k(\Lambda_kx_1, \lambda_kx_2) + \text{ affine}_k$ converges to a limit function $w$ which satisfies that $\det D^2w = 1$ in $\{x_1 < 0\} \cap S_1^w(0)$, that $\det D^2w = 0$ in $\{0 < x_1 < c\} \cap S_1^w(0)$, and that $S_1^w(0)$ is normalized. A small modification of Proposition \ref{Pogorelov2} implies that $S_{1/M}^w(0)$ is approximated by a rectangle with axes aligned with the coordinate axes. Moreover, the upper bound on the vertical second derivative implies that the horizontal length $l$ and vertical length $L$ of this rectangle satisfy $l \leq c^{-1}L$. For $k$ large we conclude that
$$\eta_{h_k/M} \sim \eta_{h_k}l/L,$$
provided $\eta_{h_k}l/L > 1$. Since $L/l \leq c^{-1}M$ by the Lipschitz regularity of $w$ and the volume estimate for centered sections, the first inequality we assumed about $\eta_{h_k}$ guarantees this is satisfied. Thus, the eccentricity $\eta_{h_k/M}$ increased by at most a universal factor compared to $\eta_{h_k}$, and if $l < cL$ then we have $\eta_{h_k/M} < \eta_{h_k}$. If not, then we have that $S_{1/M}^w(0)$ is approximated by $B_{M^{-1/2}}$, thus for $k$ large the rectangles $R_{h_k/M}(0)$ are contained in universal dilations of $M^{-1/2}R_k$. This gives the desired contradiction.
\end{proof}

We now define
\begin{equation}\label{M2}
M_2 = c_2^{-6},
\end{equation}
where $c_2$ is the universal constant from Lemma \ref{SmallFrac2}, and we let $\epsilon_2 > 0$ be the corresponding length ratio from that lemma.

Finally, the remaining lemma is purely geometric, and also uses the regularity and convexity properties of the holes in $\Omega_1$. Below, $d$ denotes the distance function from $\Omega_1^c$.

\begin{lem}\label{ModelGeometry}
Assume that $x \in \partial \Omega_1 \backslash \partial \Omega_0$ and that 
$$\frac{|(x +R_h(x)) \cap \Omega_1^c|}{|R_h(x)|} > \epsilon_1, \quad \frac{l_h(x)}{\Lambda_h(x)} > \epsilon_2.$$ 
Then
$$\Lambda_h^2(x) + \sup_{x + R_h(x)} d \leq c_3^{-1}\lambda_h(x),$$
where $c_3 > 0$ is universal.
\end{lem}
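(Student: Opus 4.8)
\emph{Proof plan.} The plan is to argue by elementary planar geometry, using that near $x$ the hole $H \subset \Omega_1^c$ whose boundary contains $x$ is trapped between two parabolas tangent at $x$ to the tangent line $T$ of $\partial H$ --- which is exactly the content of the curvature bounds $\delta \le \kappa_{\partial H} \le \delta^{-1}$ --- together with the volume lower bound and the tangential hypothesis $l_h(x)/\Lambda_h(x) > \epsilon_2$. After a rigid motion, normalize so that $x = 0$ and $R_h(0) = [-\Lambda_h, \Lambda_h] \times [-\lambda_h, \lambda_h]$ with $\Lambda_h \ge \lambda_h$, where I abbreviate $\Lambda_h = \Lambda_h(0)$, $\lambda_h = \lambda_h(0)$, $\eta_h = \Lambda_h/\lambda_h$. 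Using (\ref{DiamBound}) and shrinking the universal constant $c_0$, I may assume $R_h(0) \subset B_{\rho_0}$ for $\rho_0 = \rho_0(\delta)$ small enough that: (a) every other component of $\Omega_1^c$ is at distance $\ge \delta/2$ from $R_h(0)$, so that $d = \operatorname{dist}(\cdot, H)$ on $R_h(0)$; and (b) if the slope $\mu$ of the tangent line $T = \{x_2 = \mu x_1\}$ satisfies $|\mu| \le 1$, then $\partial H \cap B_{\rho_0}$ is a graph $x_2 = \mu x_1 + g(x_1)$ with $H$ below it, $g(0) = g'(0) = 0$, and $-c^{-1}x_1^2 \le g(x_1) \le -c x_1^2$.

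The easy regime is $\eta_h \le N$ for a universal $N = N(\epsilon_2)$ to be fixed (this includes the case $T$ vertical, since then $l_h(0) = 2\lambda_h$ and $\eta_h \le 2/\epsilon_2$): there $\Lambda_h^2 = \eta_h\Lambda_h\lambda_h \le N\Lambda_h\lambda_h \lesssim \lambda_h$ because $\Lambda_h \lesssim \operatorname{diam}(S_h^u(0)) \le \omega(c_0)$, while $\sup_{R_h(0)} d \le \max_{R_h(0)} |\cdot| = \sqrt{\Lambda_h^2 + \lambda_h^2} \le \sqrt{N^2 + 1}\,\lambda_h$ since $d(0) = 0$ and $d$ is $1$-Lipschitz. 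So assume $\eta_h > N$ and $T$ not vertical. Computing the length of the chord $T \cap R_h(0)$ as a function of $\mu$ --- a short case split according to whether this chord exits $R_h(0)$ through its long sides or its short sides --- and comparing with $l_h(0) > \epsilon_2\Lambda_h$ forces, once $N$ is large enough in terms of $\epsilon_2$, the slope bound $|\mu| \le C\lambda_h/\Lambda_h$ with $C = C(\epsilon_2)$; in particular $|\mu| \le 1$ (so (b) applies), $|\mu|\Lambda_h \lesssim \lambda_h$, and $\mu^2 \le C^2\lambda_h^2/\Lambda_h^2$.

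The bound $\Lambda_h^2 \lesssim \lambda_h$ then follows from a one-dimensional integration. By the upper parabola $H \cap R_h(0) \subset \{x_2 < \mu x_1 - c x_1^2\} \cap R_h(0)$, and the set of $x_1$ on which the downward parabola $\mu x_1 - c x_1^2$ exceeds $-\lambda_h$ is an interval $I$ with $|I| = c^{-1}\sqrt{\mu^2 + 4c\lambda_h}$; since each vertical slice of $R_h(0)$ has height $2\lambda_h$, this gives $|H \cap R_h(0)| \le 2c^{-1}\lambda_h\sqrt{\mu^2 + 4c\lambda_h}$. Comparing with $|H \cap R_h(0)| > \epsilon_1|R_h(0)| = 4\epsilon_1\lambda_h\Lambda_h$, squaring, and inserting $\mu^2 \le C^2\lambda_h^2/\Lambda_h^2$ yields $4\epsilon_1^2 c^2\Lambda_h^4 < C^2\lambda_h^2 + 4c\lambda_h\Lambda_h^2$; a case split according to which term on the right-hand side dominates gives $\Lambda_h^2 \le c_3^{-1}\lambda_h$ for a universal $c_3$. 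For the distance term, the lower parabola gives $\{x_2 < \mu x_1 - c^{-1}x_1^2\} \cap B_{\rho_0} \subset H$, so for $y = (y_1, y_2) \in R_h(0)$ the point $(y_1, \mu y_1 - c^{-1}y_1^2)$ lies in $\overline H$ and hence $d(y) \le (y_2 - \mu y_1 + c^{-1}y_1^2)_+ \le \lambda_h + |\mu|\Lambda_h + c^{-1}\Lambda_h^2 \lesssim \lambda_h$ using the bounds already obtained; adding the two estimates proves the lemma. I expect the one genuinely delicate step to be the passage from $l_h(x)/\Lambda_h(x) > \epsilon_2$ to $|\mu| \lesssim \lambda_h/\Lambda_h$: everything afterward is routine, but it is exactly here --- and not from a mere lower bound on the exterior volume --- that the near-alignment of $T$ with the long axis of the section is used, and without it the upper parabola could poke above the section and the integration argument would break down.
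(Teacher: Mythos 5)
Your proposal is correct and follows essentially the same route as the paper's proof: derive the slope bound $|\mu| \lesssim \lambda_h/\Lambda_h$ from the tangential hypothesis, trap the hole between the two parabolas coming from the curvature bounds, combine the upper parabola with the exterior-volume lower bound to obtain $\Lambda_h^2 \lesssim \lambda_h$, and use the lower parabola to bound the distance function by $\lambda_h + |\mu|\Lambda_h + c^{-1}\Lambda_h^2$. The only cosmetic difference is that the paper dismisses the low-eccentricity regime as ``obvious'' while you spell it out, and the paper derives $\Lambda_h^2 < c^{-1}(s^2+\lambda_h)$ by bounding the $x_1$-extent of the exterior set rather than by the equivalent vertical-slice integration.
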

\begin{proof}
Perform a rigid motion as in the proof of Lemma \ref{SmallFrac2} so that $x = 0$, the short side of $R_h(0)$ is vertical, and the hole lies beneath its tangent line at $0$ which has slope $s \geq 0$. Below we we will denote $R_h(0)$ by $R_h$, and we will similarly drop the notation $(0)$ from the other relevant quantities. We can assume that $\lambda_h < c\Lambda_h$, otherwise the lemma is obvious. The second inequality in the hypothesis implies that
\begin{equation}\label{sbound}
s \leq c^{-1}\lambda_h/\Lambda_h.
\end{equation}
The uniform concavity of $\partial \Omega_1$ implies that
$$R_h \cap \Omega_1^c \subset R_h \cap \{x_2 < sx_1 - cx_1^2\} \subset R_h \cap \{|x_1| < c^{-1}(s^2 + \lambda_h)^{1/2}\}.$$
Using the first inequality in the hypothesis we conclude that
$$\Lambda_h^2  < c^{-1}(s^2 + \lambda_h).$$
Using (\ref{sbound}) in the previous inequality gives
$$\Lambda_h^2 \leq c^{-1}(\lambda_h^2\Lambda_h^{-2} + \lambda_h),$$
and it follows that
\begin{equation}\label{lbound}
\Lambda_h^2 \leq c^{-1}\lambda_h.
\end{equation}
Furthermore, the $C^{1,\,1}$ regularity of $\partial \Omega_1$ implies that the hole has a boundary portion that lies above $\{x_2 = sx_1 - c^{-1}x_1^2\}$ in $\{|x_1| < \Lambda_h\}$. The distance of points in $R_h$ from $\Omega_1^c$ is thus bounded above by
$$c^{-1}(\lambda_h + |s|\Lambda_h + c^{-1}\Lambda_h^2).$$
Using (\ref{sbound}) and (\ref{lbound}) we arrive at the desired estimate.
\end{proof}

See Figure \ref{Fig2} for a summary of the results from Lemma \ref{ModelGeometry}.

\begin{figure}
 \begin{center}
    \includegraphics[scale=0.5, trim={10mm 50mm 0mm 20mm}, clip]{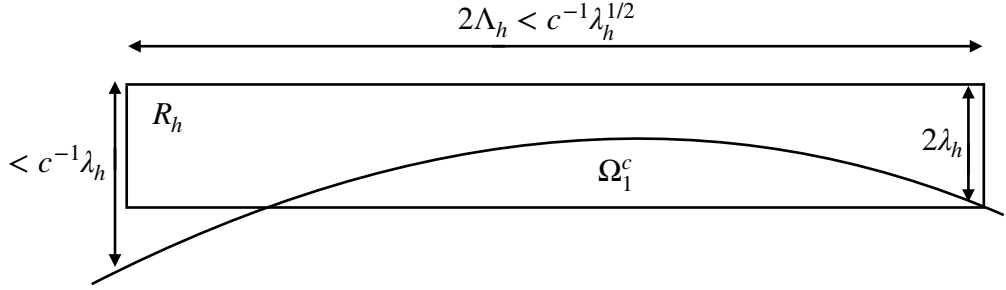}
\caption{``Model example geometry" in the case of nontrivial exterior area and roughly tangential boundary}
\label{Fig2}
\end{center}
\end{figure}


\section{Proof of Theorem \ref{Main}}\label{ProofMain}
\begin{proof}[{\bf Proof of Theorem \ref{Main}}]

For each $x \in \Omega_1$, let $S_{\bar{h}(x)}^u(x)$ be the ``maximal section contained in $\Omega_1$" centered at $x$, so that $S_{\bar{h}(x)}^u(x)$ is contained in $\Omega_1$ and tangent to $\partial \Omega_1$. The existence of such a section follows from the continuity of the sections in $h$, see \cite{CM}. By (\ref{DiamBound}) and the universal Lipschitz bound on $u$, there exists $c > 0$ universal such that for all $x$ in the $c$-neighborhood $\mathcal{N}_c$ of the union of the holes in $\Omega_1$, the section $S_{\bar{h}(x)}^u(x)$ is tangent to a hole and not $\partial \Omega_0$.

The arguments in \cite{SY} show that $u \in W^{2,\,p}(\Omega_1 \backslash \mathcal{N}_c) \cap C^{1,\,\alpha}(\overline{\Omega_1 \backslash \mathcal{N}_c})$ for any $p > 1$ and $\alpha \in (0,\,1)$, with corresponding estimates in these spaces depending on $\delta,\, p$ and $\alpha$. We may thus focus our attention on $\mathcal{N}_c$.

To that end, let $x \in \mathcal{N}_c$, and let $d$ be the distance from $x$ to the boundary. We will prove that
\begin{equation}\label{HessianBlowup}
|D^2u(x)| \leq c^{-1}d^{-1/2}.
\end{equation}
The $W^{2,\,p}$ estimate from Theorem \ref{Main} follows immediately, and the $C^{1,\,1/2}$ estimate comes from integrating (\ref{HessianBlowup}) along line segments.

Assume after a translation that $S_{\bar{h}(x)}^u(x)$ is tangent to a hole at the origin. Applying Proposition \ref{Engulfing} we engulf $S_{\bar{h}(x)}^u(x)$ by $R_h(0)$, with $h \sim \bar{h}(x)$. We will prove that
$$\eta_h(0) \leq c^{-1}d^{-1/2}.$$
Using that $|S_{\bar{h}(x)}^u(x)| \sim |R_h(0)|$ it is easy to see that $\eta_{\bar{h}(x)}(x) \leq c^{-1}\eta_h(0)$. Combining this with the above inequality and (\ref{HessianBound}) gives (\ref{HessianBlowup}). 

In what follows we will use that if $h_1 \sim h_2$ then $R_{h_1}(0)$ is approximated by $R_{h_2}(0)$ (see Appendix), whence $\lambda_{h_1}(0) \sim \lambda_{h_2}(0),\, \Lambda_{h_1}(0) \sim \Lambda_{h_2}(0),$ and $\eta_{h_1}(0) \sim \eta_{h_2}(0)$. We will also denote $\eta_h(0)$ by $\eta_h$, and we will similarly drop the notation $(0)$ from the other relevant quantities.

If either $\eta_h \leq c_2^{-1}M_2$ or the conditions of Lemma \ref{ModelGeometry} are satisfied, then we are done. This is obvious in the first case. In the second, Lemma \ref{ModelGeometry} gives $\Lambda_h^2 \leq c^{-1}\lambda_h$, hence $\eta_h \leq c^{-1}\lambda_h^{-1/2}$. Moreover, the distance from boundary in $R_h$ is at most $c^{-1}\lambda_h$. In particular, $\lambda_h^{-1/2} \leq c^{-1}d^{-1/2}$, and we are also done in this case.

So assume that neither is satisfied, and let $\hat{t}$ be the supremum of heights $t$ such that $\eta_s > c_2^{-1}M_2$ and the conditions of Lemma \ref{ModelGeometry} are not satisfied at height $s$ for all $s \in [h,\,t]$. 

We first assume that $\hat{t} \leq 1$. Then there exist $\hat{h} \in [h,\,\hat{t}]$ and $\kappa \in (1,\,2)$ such that $\kappa \hat{h} \geq \hat{t}$ and either $\eta_{\kappa\hat{h}} \leq c_2^{-1}M_2$ or the conditions of Lemma \ref{ModelGeometry} are satisfied at height $\kappa\hat{h}$, but neither holds at height $\hat{h}$. At height $\hat{h}$, we can either apply Lemma \ref{SmallFrac} or \ref{SmallFrac2}, giving a conclusion at height $M_1^{-1}\hat{h}$ or $M_2^{-1}\hat{h}$. Repeat applying Lemma \ref{SmallFrac} or Lemma \ref{SmallFrac2} (say the former $k$ times and the latter $l$ times) until the first time $M_1^{-k}M_2^{-l}\hat{h} < h$. Assume that eccentricity increased in the application of Lemma \ref{SmallFrac2} $l' \leq l$ times.
We then have
$$\eta_h \leq c^{-1}c_1^{-k}c_2^{-l'}\eta_{\hat{h}} \leq c^{-1}r^{-1/2}\eta_{\kappa \hat{h}},$$
where 
$$r := c_1^{-k}M_1^{-k/2}c_2^{-l'}M_2^{-l'/2} = c_1^{2k}c_2^{2l'} < 1.$$
By Lemmas \ref{SmallFrac} and \ref{SmallFrac2}, we also have that
$$R_h \subset c^{-1}rR_{\kappa\hat{h}}.$$

We now consider the case that at height $\kappa\hat{h}$, the conditions of Lemma \ref{ModelGeometry} are satisfied. The lemma implies that
$$\eta_{\kappa\hat{h}} \leq c^{-1}\lambda_{\kappa\hat{h}}^{-1/2},$$
thus
\begin{equation}\label{ModelGeometryBound}
\eta_h \leq c^{-1}r^{-1/2}\lambda_{\kappa\hat{h}}^{-1/2}.
\end{equation}
Assume after a rigid motion that the picture is oriented as in the proof of Lemma \ref{ModelGeometry}, so that $R_{\kappa\hat{h}}$ has vertical short axis. Then the boundary of $\Omega_1$ contains a portion that lies above $\{x_2 = sx_1 - c^{-1}x_1^2\}$,
where $0 \leq s \leq c^{-1}\lambda_{\kappa\hat{h}}/\Lambda_{\kappa\hat{h}}$ and $\Lambda_{\kappa\hat{h}}^2 \leq c^{-1}\lambda_{\kappa\hat{h}}$ (by the proof of Lemma \ref{ModelGeometry}). Recall that $R_h$ is contained in the $c^{-1}r$ times dilation of $R_{\kappa\hat{h}}$. Thus, in $R_h$, the distance from boundary is at most
$$c^{-1}(r\lambda_{\kappa\hat{h}} + sr\Lambda_{\kappa\hat{h}} + r^{2}\Lambda_{\kappa\hat{h}}^2).$$
Using the previous inequalities we see that the second and third terms are bounded by $r\lambda_{\kappa\hat{h}}$ and $r^{2}\lambda_{\kappa\hat{h}}$, respectively, giving a bound on the distance between boundary in $R_h$ of the size
$$d \leq c^{-1}r\lambda_{\kappa\hat{h}}.$$
Rearranging gives
$$\lambda_{\kappa\hat{h}}^{-1/2} \leq c^{-1} r^{1/2}d^{-1/2}.$$
Using this in (\ref{ModelGeometryBound}) gives
$$\eta_h \leq c^{-1}d^{-1/2},$$
and we are done with this case.

In the case that $\eta_{\kappa\hat{h}} \leq c_2^{-1}M_2$, we have
$$\eta_h \leq c^{-1}r^{-1/2}.$$
Furthermore, we have that $R_h$ is contained in a universal dilation of $B_r$ since $R_{\hat{h}}$ is contained in $B_{c^{-1}}$, thus in $R_h$ the distance from the boundary is at most $c^{-1}r$, hence
$$\eta_h \leq c^{-1}d^{-1/2}$$
in this case as well.

Finally, we deal with the case that $\hat{t} > 1$. Since $\eta_1$ is still bounded by a universal constant, we can take $\hat{h} = 1$ and repeat exactly the same arguments as above. More precisely, by repeated application of Lemma \ref{SmallFrac} or Lemma \ref{SmallFrac2} starting from height $\hat{h} = 1$, we get
$$\eta_h \leq c^{-1}r^{-1/2}$$ 
and
$$R_h \subset B_{c^{-1}r},$$
where $r$ is defined in the same way as above. Combining these two conclusions we get $\eta_h \leq c^{-1}d^{-1/2}$, and this completes the proof.
\end{proof}

\section{Future Directions}\label{FutureDirections}

To conclude the paper we list a few questions to be investigated in future work.

\vspace{3mm}

\begin{enumerate}
\item Establish Sobolev regularity for optimal transport maps of a natural class of non-convex domains in higher dimensions. As noted above, the Pogorelov-type estimate Proposition \ref{Pogorelov2} is convenient but not required for the result in this paper, so there is hope for such an extension.

\vspace{3mm}

\item In two dimensions, enlarge the class of source domains being considered e.g. to arbitrary smooth domains. Our arguments handle concave parts of the boundary since we only use the equation and not the convexity of the target domain. However, convex parts of the boundary that lie inside the convex hull of the source domain may be tricky to handle, since at such points we do not have duality, which played an important role in the works of Caffarelli \cite{C4} and Savin-Yu \cite{SY}.

\vspace{3mm}

\item Investigate applications of our results to the existence theory for the semigeostrophic equations in cases where the source density is allowed to vanish. In previous works dealing with the case where the source density is bounded from below by a positive constant, $W^{2,\,1}$ estimates played a central role (\cite{ACDF1}, \cite{ACDF2}, \cite{F}).

\vspace{3mm}

\item Investigate applications of the ideas in this paper to the partial regularity theory of optimal transport maps when the domains are not convex. In this case, optimal transport maps can have singularities, and interesting results have been obtained about the size of the singular set (\cite{DF0}, \cite{FK}, \cite{F2}, \cite{GO}). However, the fine geometric measure-theoretic structure of the singular set is not well-understood, even in two dimensions with smooth domains and quadratic cost. In that case, a reasonable conjecture seems to be that the one-dimensional Hausdorff measure of the singular set is bounded.
\end{enumerate}

\section{Appendix}\label{Appendix}

In this section we provide some of the details that we omitted for simplicity of presentation above. We start with a simple lemma that will be used in some of the subsequent proofs.

\begin{lem}\label{Segment1}
There is no convex function $w$ on $B_1 \subset \mathbb{R}^2$ that satisfies 
$$\det D^2w \geq \chi_{\{x_2 > 0\}}, \quad w|_{\{x_2 = 0\}} \text{ linear}.$$ 
\end{lem}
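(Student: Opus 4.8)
The plan is to argue by contradiction: suppose such a $w$ exists. After subtracting the linear function that equals $w$ on $\{x_2 = 0\}$, we may assume $w = 0$ on $\{x_2 = 0\} \cap B_1$. Since $w$ is convex and vanishes on a line through the origin, it is nonnegative on one side and... no — convexity forces $w \geq 0$ on neither side a priori, but restricting to a vertical segment $\{x_1 = t\}$ the function $s \mapsto w(t,s)$ is convex in $s$ and vanishes at $s = 0$, so $w(t,\cdot)$ lies below the chord on $[-a,a]$ is false; rather it lies \emph{below} the tangent configuration only if linear. The key point I want is that convexity of $w$ together with vanishing on the whole line $\{x_2=0\}$ forces $w$ to be nonpositive on a neighborhood of the segment in \emph{at most one} half-plane unless it is identically zero near the line. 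Let me instead use the standard comparison: consider the function $w$ on the upper half-disk $B_1^+ = B_1 \cap \{x_2 > 0\}$ where $\det D^2 w \geq 1$. On the flat part $\{x_2 = 0\}$ we have $w = 0$.

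\textbf{Main step.} I would compare $w$ from above in $B_r^+$ (for small $r$) with the explicit Pogorelov-type barrier. Since $w$ is convex with $w = 0$ on the diameter and bounded on $\partial B_r^+$ by $C r$ (by the Lipschitz bound coming from convexity plus boundedness of $w$), while $\det D^2 w \geq 1$, one compares with a supersolution $\bar w$ of $\det D^2 \bar w = 1$ vanishing on $\{x_2 = 0\}$ and equal to $w$ on the upper semicircle. Such $\bar w$ behaves like $c\, x_2^{3/2}$ near the flat boundary — in fact the one-dimensional-in-$x_2$ comparison function $\phi(x_2) = C x_2^{3/2}$ has, when viewed in $2$D, $\det D^2 \phi = 0$, so that's too weak; instead the right barrier is Pogorelov's $\bar w(x_1,x_2) = x_2^{3/2} g(x_1)$ type, or more simply: on each vertical segment $\{x_1 = t\}$, convexity of $s \mapsto w(t,s)$ with $w(t,0) = 0$ gives $w(t,s) \geq s\, w_2(t,0^+)$ for $s > 0$, hence $w_2(t, 0^+) \leq w(t,s)/s$. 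Since $w$ is bounded, letting $s \to 0$ is not what we want; rather $w_{22}(t, \cdot)$ being the convex one-variable second derivative, and $\det D^2 w = w_{11} w_{22} - w_{12}^2 \geq 1$ forces $w_{11} > 0$ quantitatively near the boundary, which combined with $w_{11}(t,0) = (w|_{\{x_2=0\}})'' = 0$ gives the contradiction via continuity/integration.

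\textbf{The cleanest route.} I expect the intended argument is: since $w$ restricted to $\{x_2 = 0\}$ is linear, after normalization $w_{11} = 0$ along $\{x_2 = 0\}$ in the sense that $w(x_1, 0)$ is linear. By convexity $w \geq 0$ on $\{x_2 = 0\}$ is replaced by $w = 0$ there; convexity of $w$ on $B_1$ then forces, for the tangent plane $\pi = 0$ at the origin, that $w \geq 0$ everywhere (the tangent plane at a point where $w$ vanishes along a line must be that line's plane, as the line is in the graph's boundary of the epigraph). So $w \geq 0$ on $B_1$, $w = 0$ on the diameter, and $\det D^2 w \geq \chi_{\{x_2 > 0\}}$ in the Alexandrov sense. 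But then the Alexandrov maximum principle / the fact that the Monge--Ampère measure must charge a set where $w = 0$ on a segment gives a contradiction: a nonnegative convex function vanishing on a whole segment through an interior point and with positive Monge--Ampère measure on one side of it is impossible, because near an interior segment of the zero set the subdifferential is contained in the single line's direction's orthogonal complement — i.e. $\partial w$ of the upper half-disk has measure zero (it's contained in a line segment in the gradient plane, since $\nabla w$ can only move in the $e_2$ direction as we cross the flat set), contradicting $|\partial w(B_r^+)| \geq |B_r^+| > 0$ from $\det D^2 w \geq \chi_{\{x_2 > 0\}}$.

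\textbf{Main obstacle.} The delicate point is making rigorous the claim that the image of the subdifferential $\partial w$ over a neighborhood of the flat segment has measure zero — equivalently that $w$ cannot have strictly positive Monge--Ampère mass arbitrarily close to the segment on which it is linear. I would handle this by noting that if $w = 0$ on $\{x_2 = 0\} \cap B_1$ and $w \geq 0$, then for any $p \in \partial w(z)$ with $z = (z_1, 0)$ we need $p \cdot (y - z) \leq w(y) - w(z) = w(y)$ for all $y$; taking $y = (y_1, 0)$ gives $p_1 (y_1 - z_1) \leq 0$ for all nearby $y_1$, forcing $p_1 = 0$. So $\partial w$ over the flat segment lies in $\{0\} \times \mathbb{R}$, which has measure zero; then a continuity/covering argument (sections shrink to the segment) transfers this to get $|\partial w(B_r^+)| \to 0$ as $r \to 0$ faster than $|B_r^+|$, contradicting $\det D^2 w \geq \chi_{\{x_2>0\}}$. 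Packaging this last limiting step carefully — perhaps by directly estimating $|\partial w(\{0 < x_2 < \rho\} \cap B_{1/2})|$ and showing it is $o(\rho)$ — is where the real work lies, though it is short.
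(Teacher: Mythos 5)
Your first reduction — subtract an affine function to get $w \geq 0$ with $w = 0$ on $\{x_2 = 0\} \cap B_1$ — matches the paper's first normalization, though you should subtract a supporting plane at an interior boundary point rather than merely ``the linear function that equals $w$ on $\{x_2=0\}$'' (subtracting only the latter does not by itself yield $w \geq 0$; one must check, as is easy, that the tangential slope of any supporting plane at a point of the segment is forced to agree with the slope of $w|_{\{x_2=0\}}$). What your proposal does not do is the paper's \emph{second} normalization: also subtract a suitable multiple of $x_2$ so that $w(0,t) = o(t)$ as $t \to 0^+$. That second step is what makes the paper's argument close.

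The genuine gap is in your final, measure-theoretic step. Your deduction is correct as far as it goes: for $z=(z_1,z_2)$ with $0 < z_2 < \rho$, $|z_1| < 1/2$, and $p \in \partial w(z)$, taking $y$ on $\{x_2=0\}$ directly below and horizontally displaced from $z$ gives $p_2 \geq 0$ and $|p_1| \leq C\,p_2\,z_2 \leq C\,p_2\,\rho$. With the Lipschitz bound $p_2 \leq M$ (valid on $B_{1/2}$), this places $\partial w(E_\rho)$, where $E_\rho = \{0 < x_2 < \rho\} \cap B_{1/2}$, inside the triangle $\{0 \leq p_2 \leq M,\ |p_1| \leq C p_2 \rho\}$, whose area is $\sim M^2 \rho$. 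But $|E_\rho| \sim \rho$ as well, so the inequality $|\partial w(E_\rho)| \geq |E_\rho|$ only yields $M \gtrsim 1$, which is no contradiction. Your bound is $O(\rho)$, not the $o(\rho)$ you claim, and the gap is not ``short'': upgrading it would require showing the relevant Lipschitz constant can be made small, which needs exactly the second normalization plus a nontrivial propagation argument. (Upper semicontinuity of $\partial w$ also does not help directly, since a neighborhood of the measure-zero set $\partial w(\{x_2=0\}\cap B_{1/2})$ has positive measure and the rate of shrinkage is not controlled.)

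The paper takes a different and cleaner route, a barrier argument. After the additional normalization $w(0,t)=o(t)$, for any $k>0$ one can choose $h>0$ small so that the thin rectangle $R = [-1/2,1/2]\times[0,2kh]$ lies inside $\{w < h\}$. The quadratic $Q = 8h x_1^2 + 2k^{-2}h^{-1}(x_2-kh)^2$ then dominates $w$ on $\partial R$, has $\det D^2 Q = 64 k^{-2} < 1$ for $k$ large, and vanishes at the interior point $(0,kh)$ of $R$, so the comparison principle forces $w(0,kh) < 0$, contradicting $w \geq 0$. You should either adopt this barrier approach or supply the missing quantitative control on the Lipschitz constant in your version.
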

\begin{proof}
After subtracting an affine function we may assume that $w \geq 0$ and $w|_{\{x_2 = 0\}} = 0$. After subtracting a multiple of $x_2$ we may assume further that $w(0,\,t) = o(t)$ for $t > 0$. It follows that, for any $k > 0$, we can choose $h > 0$ small such that
$$R := [-1/2,\,1/2] \times [0,\,2kh] \subset \{w < h\}.$$
The quadratic polynomial $Q = 8hx_1^2 + 2k^{-2}h^{-1}(x_2 - kh)^2$ thus lies above $w$ on the boundary of $R$, and for $k$ large satisfies $\det D^2Q < 1$. The comparison principle implies that $Q > w \geq 0$ in $R$, contradicting that $Q$ vanishes at the center of $R$.
\end{proof}

We now prove the claim that functions in $F_{\delta}$ are not linear along line segments centered in the closure of the source domain, from which the universal bound on diameters of sections followed.

\begin{lem}\label{Segment2}
Let $u \in \mathcal{F}_{\delta}$. Then $u$ is not linear along any line segment centered at a point in $\overline{\Omega_1}$.
\end{lem}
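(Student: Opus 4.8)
The plan is to prove the contrapositive locally: if $u \in \mathcal{F}_\delta$ were linear along some segment centered at a point $x_0 \in \overline{\Omega_1}$, we derive a contradiction by examining the two possible positions of that segment relative to $\Omega_1$. Since $u$ is smooth and locally uniformly convex in the open set $\Omega_1$, it cannot be linear along any segment contained in $\Omega_1$; thus the offending segment must meet $\Omega_1^c$, i.e. it passes through (the closure of) a hole or through a point of $\partial\Omega_1$. The idea is to reduce to the flat model covered by Lemma \ref{Segment1} after a suitable affine normalization.

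First I would set up coordinates so that $x_0$ is the origin and the segment lies along the $x_1$-axis, say $\{(t,0) : |t| \le 2a\}$ for some $a>0$, with $u$ affine on it; subtract that affine function so that $u \ge 0$ on $\mathbb{R}^2$ and $u \equiv 0$ on this segment. Now I use the structure of $\Omega_1$: it is a convex domain with $C^{1,1}$ uniformly convex holes. Because the segment is centered at $x_0 \in \overline{\Omega_1}$ and $\Omega_1$ has nonempty interior near $\overline{\Omega_1}$, after possibly shrinking $a$ the segment has a relatively open portion whose nearby region on one side (say $\{x_2 > 0\}$, after a reflection) lies inside $\Omega_1$ — either the whole small half-disk $B_r \cap \{x_2 > 0\}$ is in $\Omega_1$, or it is cut by the boundary of a hole, which by $C^{1,1}$ regularity is a graph tangent (after rotation) to $\{x_2 = 0\}$, so that $B_r \cap \{x_2 > c x_1^2\}$ lies in $\Omega_1$ for small $r$. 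In either situation, $\det D^2 u \ge \chi_{\{x_2 > c x_1^2\}}$ near the origin in the Alexandrov sense.

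With this in hand, I would run exactly the barrier argument of Lemma \ref{Segment1}, adjusted for the parabolic boundary: after the rescaling $x_1 \mapsto \epsilon x_1$, $x_2 \mapsto \epsilon^2 x_2$ (which preserves the Monge-Amp\`ere measure up to a constant and straightens the parabola $\{x_2 = c x_1^2\}$ toward $\{x_2 = 0\}$ as $\epsilon \to 0$), the hypotheses converge to those of Lemma \ref{Segment1}: a convex function on a neighborhood of the origin, vanishing and linear on $\{x_2 = 0\}$, with $\det D^2 \ge \chi_{\{x_2 > 0\}}$ on one side. Alternatively — and more cleanly — I would not pass to a limit but directly construct the comparison polynomial $Q = A x_1^2 + B(x_2 - kh)^2$ with $A,B$ chosen (in terms of small $h$ and large $k$) so that $Q \ge u$ on the boundary of a thin rectangle $R = [-\ell,\ell] \times [0, 2kh]$ that is captured inside $\{u < h\}$ for $h$ small (using $u(0,t) = o(t)$, which one arranges by subtracting a further multiple of $x_2$), while $\det D^2 Q < 1 \le \det D^2 u$ on the relevant region. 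The comparison principle for the Monge-Amp\`ere equation then forces $Q > u \ge 0$ inside $R$, contradicting $Q(0, kh) = 0$. The one point needing care is that the rectangle $R$ must be thin enough in $x_1$ that it sits inside the parabolic region $\{x_2 > c x_1^2\}$ where the density lower bound holds; since the rectangle has height $\sim kh$ and we are free to take its width $\ell$ as small as we like, choosing $\ell^2 \ll h/c$ suffices.

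\textbf{Main obstacle.} The delicate step is the geometric reduction: verifying that, regardless of whether $x_0$ is interior to a hole's closure, on $\partial\Omega_1$, or at a point where the segment grazes a hole, one always lands on a configuration where a one-sided parabolic (hence, after rescaling, flat) region lies in $\Omega_1$ with the segment on its flat face. The case $x_0 \in \partial\Omega_0$ is impossible since sections there are handled separately, but in principle the lemma is stated for all of $\overline{\Omega_1}$; one must check that if the segment lies partly outside $\Omega_0$ there is still interior mass of $\Omega_1$ on one side — which holds because $\Omega_0$ is convex with $C^{1,1}$ boundary and the holes are a definite distance from $\partial\Omega_0$, so a centered segment cannot be squeezed between $\partial\Omega_0$ and a hole. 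Once the parabolic lower bound $\det D^2 u \ge \chi_{\{x_2 > c x_1^2\}}$ is secured, the barrier computation is routine and essentially identical to Lemma \ref{Segment1}.
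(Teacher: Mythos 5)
The core of your approach---reduce to a barrier argument of the type in Lemma \ref{Segment1} in a small ball around an interior point of the segment---is the same as the paper's, and it handles the bulk of the cases. However, there are two issues, one minor and one serious.

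\textbf{Minor issue: the parabolic variant is unnecessary (and the geometry is off).} When the segment is tangent to a hole $H$ at a point $p$, the hole (being convex with tangent line equal to the segment's line) lies entirely on \emph{one} side of that line, so the opposite half-disk $B_r(p) \cap \{x_2 > 0\}$ is fully contained in $\Omega_1$ and Lemma \ref{Segment1} applies verbatim after scaling $B_r$ to $B_1$; no parabolic region enters. The configuration you describe, where the good side of the segment is $\{x_2 > cx_1^2\}$ rather than a half-ball, does not arise from a straight segment tangent to a convex hole. If instead the segment crosses $\partial H$ transversally, one simply moves along the segment to a nearby point in the open set $\Omega_1$ and uses a full half-ball there.

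\textbf{Serious gap: the segment tangent to $\partial\Omega_0$.} You write that ``the case $x_0 \in \partial\Omega_0$ is impossible since sections there are handled separately,'' and that $\partial\Omega_0$ is $C^{1,1}$. Neither is correct: the lemma must hold for \emph{every} $x_0 \in \overline{\Omega_1}$ (it is what yields the uniform diameter bound (\ref{DiamBound}), which is used for sections centered at all boundary points), and the paper assumes only that $\Omega_0$ is convex. The genuinely problematic configuration is a segment $\sigma$ tangent to $\partial\Omega_0$ at a single point lying in the interior of $\sigma$: then $\sigma$ touches $\overline{\Omega_0}$ (hence $\overline{\Omega_1}$) only at the tangency point, so \emph{no} interior point of $\sigma$ has a half-ball on either side inside $\Omega_1$, and the Lemma \ref{Segment1} barrier cannot be set up. Your claim that ``there is still interior mass of $\Omega_1$ on one side'' fails precisely here. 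The paper handles this case by an entirely different mechanism: since $u$ cannot be linear on the whole line (that would force $\det D^2 u \equiv 0$), the agreement set of $u$ with the relevant affine function is a bounded convex set whose extremal points lie \emph{outside} $\Omega_0$; but Caffarelli's localization argument forbids such extremal points in a region where $\det D^2 u \geq 0$ vanishes identically, giving a contradiction with $\det D^2 u = 0$ on $\Omega_0^c$. You would need to add an argument of this type (or an equivalent one) to close the proof.
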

\begin{proof}
By Lemma \ref{Segment1} (appropriately rescaled), $u$ is not linear along any segment in $\overline{\Omega_1}$. The only remaining possibility is that $u$ is linear along a segment that is tangent to $\partial \Omega_0$ at a single point that lies inside the segment. Since $u$ cannot be linear along the whole line containing this segment (this would imply $\det D^2u \equiv 0$ by standard convex analysis), we conclude that the agreement set between $u$ and a linear function has extremal points outside of $\Omega_0$. This contradicts that $\det D^2u = 0$ outside of $\Omega_0$.
\end{proof}

Finally, the engulfing property Proposition \ref{Engulfing} follows from the following pair of lemmas. Before proceeding we recall a standard renormalization procedure. For $u \in \mathcal{F}_{\delta},\, x \in \overline{\Omega_1}$ and $S_h^u(x)$ contained in $\Omega_1$ or intersecting one hole, let
$$u_h(y) = \frac{1}{h}u(A_hy) + L_h,$$
where $A_h$ is an affine transformation of determinant $h$ that takes a square centered at $0$ to $x + R_h(x)$, and $L_h$ is a linear function chosen so that $u_h = 0$ on $\partial S_1^{u_h}(0)$. Then $u_h$ solves $\det D^2u_h = \chi_{\Omega_h}$ in the normalized domain $S_1^{u_h}(0)$, where $\Omega_h^c$ is convex and $\overline{\Omega_h}$ contains $0$. We call this procedure ``renormalizing in the section $S_h^u(x)$".

\begin{lem}\label{Engulfing0}
If $u \in \mathcal{F}_{\delta},\, y \in \overline{S_h^u(x)} \cap (\partial\Omega_1 \backslash \partial \Omega_0)$ and $S_h^u(x) \subset \Omega_1$, then 
$$S_h^u(x) \subset S_{c^{-1}h}^u(y).$$
\end{lem}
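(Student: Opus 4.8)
Here is my plan for proving Lemma~\ref{Engulfing0}.

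\medskip

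\noindent\textbf{Proof strategy.} The statement asserts that if a centered section $S_h^u(x)$ lies inside $\Omega_1$ and touches the boundary of a hole at a point $y$, then a centered section at $y$ of comparable (universally larger) height $c^{-1}h$ contains $S_h^u(x)$. The natural approach is renormalization followed by a compactness/contradiction argument. First I would renormalize in the section $S_h^u(x)$ using the procedure described just before the lemma: after applying the affine map $A_h$ and subtracting a linear function, we obtain $u_h$ solving $\det D^2 u_h = \chi_{\Omega_h}$ in a normalized domain $S_1^{u_h}(0)$, with $\Omega_h^c$ convex, $\overline{\Omega_h} \ni 0$, and the image $\tilde y$ of $y$ lying on $\partial \Omega_h \setminus \partial\Omega_0$ inside $\overline{S_1^{u_h}(0)}$. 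Since affine maps of determinant $h$ carry centered sections of $u$ at height $h t$ to centered sections of $u_h$ at height $t$, and carry the rectangles $R_{ht}$ to rectangles for $u_h$, it suffices to prove the rescaled statement: $S_1^{u_h}(0) \subset S_{c^{-1}}^{u_h}(\tilde y)$ for a universal $c$.

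\medskip

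\noindent\textbf{Key steps.} (1) Set up the renormalization and reduce to the normalized statement as above. (2) Argue by contradiction and compactness: suppose there is a sequence $u_k$ of such renormalized functions (with normalized sections $S_1^{u_k}(0)$, $u_k|_{\partial S_1^{u_k}(0)} = 0$, $\det D^2 u_k = \chi_{\Omega_k}$, $\Omega_k^c$ convex containing $0$ on its boundary, and boundary points $\tilde y_k \in \overline{S_1^{u_k}(0)} \cap \partial\Omega_k$) for which the required engulfing fails with constant tending to infinity; extract a uniformly convergent subsequence $u_k \to w$ with $S_1^w(0)$ normalized, $w|_{\partial S_1^w(0)}=0$, $\det D^2 w = \chi_{\Omega_\infty}$ for some convex-complement $\Omega_\infty$, and $\tilde y_k \to \tilde y_\infty \in \overline{S_1^w(0)}$, using the standard fact (invoked at the start of Section~\ref{KeyLemmas}) about weak convergence of Monge--Amp\`ere measures. (3) For the limit $w$, show that the centered section $S_t^w(\tilde y_\infty)$ grows to engulf all of $S_1^w(0)$ as $t$ increases to some universal value: the obstruction to this would be that $w$ is linear along a segment through $\tilde y_\infty$, or that the supporting plane at $\tilde y_\infty$ touches $w$ only on a lower-dimensional set whose extremal points escape to $\partial S_1^w(0)$ in a degenerate way. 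Here one uses that $w$ is genuinely convex near $\tilde y_\infty$ in the directions into $\Omega_\infty$, together with Lemma~\ref{Segment1} (rescaled) to rule out linearity along segments entering $\Omega_\infty$, and a convex-analysis argument (as in Lemma~\ref{Segment2}) to handle segments tangent to $\partial\Omega_0$. (4) Quantify: since the centered section of $w$ at $\tilde y_\infty$ of some universal height $h_0 < c_0$ strictly contains $\overline{S_1^w(0)}$, the same holds with a slightly larger height for $u_k$ when $k$ is large, contradicting the failure of engulfing. (5) Unwind the renormalization to recover $S_h^u(x) \subset S_{c^{-1}h}^u(y)$.

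\medskip

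\noindent\textbf{Main obstacle.} I expect step (3)--(4), controlling the growth of centered sections of the limit $w$ at the boundary point $\tilde y_\infty$, to be the crux. The subtlety is that $\tilde y_\infty$ lies on $\partial\Omega_\infty$, where $\det D^2 w$ jumps, so $w$ need not be locally uniformly convex there; one must exploit that $\tilde y_\infty \notin \partial\Omega_0$, so the Monge--Amp\`ere measure is positive on one side, to get enough strict convexity that the centered sections at $\tilde y_\infty$ open up at a universal rate and cannot degenerate while still being contained in the fixed normalized set $S_1^w(0)$. A secondary technical point is verifying that the affine renormalization is compatible with the John-type rectangles $R_h$ and the definition of centered sections, so that the rescaled problem is literally an instance of the same setup; this is routine but must be stated carefully. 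Once the limit statement is in hand, the compactness argument closes in the usual way.
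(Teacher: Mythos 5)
Your overall strategy (renormalize, argue by compactness and contradiction, reduce to a statement about a limit function, invoke Lemma \ref{Segment1} to rule out degeneracy) is the right family of ideas, but the renormalization is set up in the wrong section, and this creates a gap that your sketch does not close. You renormalize in $S_h^u(x)$ and then, in step (3), need to analyze the centered sections $S_t^w(\tilde y_\infty)$ of the limit for $t$ up to some finite value. The difficulty is that $\tilde y_\infty$ typically lies on $\partial S_1^w(0)$ (the important case is when the section is tangent to the hole at $y$), and since $\tilde y_\infty$ is the \emph{center of mass} of $S_t^w(\tilde y_\infty)$, any such section that contains $S_1^w(0)$ must extend a comparable distance outside $S_1^w(0)$. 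But the compactness you invoke only controls the rescaled functions on $\overline{S_1^{w_k}(0)}$: outside the normalized section their Lipschitz constants are of order $h_k^{-1}\Lambda_{h_k} = \lambda_{h_k}^{-1}$, which blows up as $h_k \to 0$ when the sections are eccentric. So the limit object on which step (3) is supposed to operate need not exist where you need it, and in the degenerate limit the centered sections at $\tilde y_\infty$ may not even be well defined. Moreover, even granting the limit, step (3) as stated is essentially the lemma itself applied to $w$; listing the possible obstructions is not a proof of their absence.

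The paper's proof avoids both problems by renormalizing in the \emph{large} section $S_{kh_k}^{u_k}(y_k)$ centered at the boundary point (not in $S_{h_k}^{u_k}(x_k)$). Then everything relevant happens inside the normalized picture: the failure of engulfing at ratio $k$ means that the image of $S_{h_k}^{u_k}(x_k)$ is a section of relative height $1/k \to 0$ whose closure contains $0$ and which reaches $\partial S_1^{w_k}(0)$; since the oscillation of $w_k$ minus an affine function on that set is $O(1/k)$, in the limit $w$ is linear along a segment from $0$ to $\partial S_1^w(0)$ contained in $\overline{\Omega}$ (because $S_{h_k}^{u_k}(x_k) \subset \Omega_{1k}$), and this contradicts Lemma \ref{Segment1} after a rescaling. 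This degeneration of the small section into a segment of linearity is the mechanism that closes the compactness argument, and it is absent from your outline. To repair your proposal, swap which section you renormalize in and extract the linearity contradiction from the shrinking relative height of the inner section, rather than trying to grow sections of the limit function at $\tilde y_\infty$.
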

\begin{proof}
Assume by way of contradiction that the lemma is false. Then there exists a sequence $u_k \in \mathcal{F}_{\delta}$ such that $S_{h_k}^{u_k}(x_k)$ are contained in the source domains and $y_k$ are in their closures and the boundary of a hole (say $y_k = 0$ after a translation), but $S_{kh_k}^{u_k}(0)$ do not contain $S_{h_k}^{u_k}(x_k)$. Note that we may assume that $h_k \leq c^{-1}k^{-1}$ by the uniform Lipschitz bound on $u_k$, so $h_k \rightarrow 0$ and the sections localize close to the holes. After renormalizing in the sections $S_{kh_k}^{u_k}(0)$, we get a subsequence of rescalings of $u_k$ that converge to a function $w$ which satisfies that $S_1^w(0)$ is normalized, $\det D^2w = 1$ in a domain $\Omega$ with $\Omega^c$ convex, and $w$ is linear along a segment from $0$ to $\partial S_1^w(0)$ contained in $\overline{\Omega}$. This contradicts Lemma \ref{Segment1}, appropriately rescaled.
\end{proof}

\begin{lem}\label{Engulfing1}
Let $u \in \mathcal{F}_{\delta}$. Then for all $\alpha \in (0,\,1)$, there exists $\beta(\delta,\,\alpha) > 0$ such that 
$$S_{\beta h}^u(y) \subset \alpha S_h^u(y)$$
for all $y \in \partial \Omega_1 \backslash \partial \Omega_0$, where $\alpha S_h^u(y)$ is the $\alpha$-times dilation of $S_h^u(y)$ around $y$.
\end{lem}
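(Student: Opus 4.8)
\textbf{Proof proposal for Lemma \ref{Engulfing1}.}

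The plan is to argue by compactness and contradiction, exactly as in the proofs of Lemmas \ref{SmallFrac}, \ref{SmallFrac2}, and \ref{Engulfing0}. Suppose the statement fails for some fixed $\alpha \in (0,1)$. Then there exist a sequence $u_k \in \mathcal{F}_{\delta}$, boundary points $y_k \in \partial \Omega_{1k} \backslash \partial \Omega_0$, and heights $h_k > 0$ such that $S_{h_k/k}^{u_k}(y_k)$ is \emph{not} contained in $\alpha S_{h_k}^{u_k}(y_k)$. After a translation we may take $y_k = 0$. By the universal Lipschitz bound on functions in $\mathcal{F}_\delta$ together with the diameter bound (\ref{DiamBound}), we may assume $h_k \to 0$, so the sections localize near a single hole. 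The first step is to renormalize in the sections $S_{h_k}^{u_k}(0)$: applying the renormalization procedure recalled before Lemma \ref{Engulfing0}, we obtain rescaled functions $w_k$ with $S_1^{w_k}(0)$ normalized, $w_k = 0$ on $\partial S_1^{w_k}(0)$, and $\det D^2 w_k = \chi_{\Omega_{h_k}}$ where $\Omega_{h_k}^c$ is convex and $0 \in \overline{\Omega_{h_k}}$. Since the Monge-Amp\`ere measures are uniformly bounded above by $1$, a subsequence of the $w_k$ converges locally uniformly to a convex function $w$ with $S_1^w(0)$ normalized, $w = 0$ on $\partial S_1^w(0)$, and $\det D^2 w = \chi_{\Omega}$ for some set $\Omega$ with $\Omega^c$ convex and $0 \in \overline{\Omega}$.

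The key point is that $w$ is \emph{strictly convex} at the origin, in the sense that its centered sections at $0$ shrink to $0$: there is no line segment through $0$ along which $w$ is linear. Indeed, if $w$ were linear along a segment from $0$ into $S_1^w(0)$, that segment would lie in $\overline{\Omega}$ (otherwise $w$ would be linear on a segment inside $\Omega^c$, but $\det D^2 w$ vanishes there so this only forces the segment to avoid the interior of $\Omega$), and since $\Omega^c$ is convex with $0$ on its boundary, we could extend to a full chord of $S_1^w(0)$ through $0$ on which $w$ is linear, with a portion of positive length inside $\overline\Omega$ where $\det D^2 w \geq \chi_{\{x_2 > 0\}}$ after an affine change of coordinates flattening $\partial \Omega^c$ at $0$; this contradicts Lemma \ref{Segment1} (appropriately rescaled), exactly as in the proof of Lemma \ref{Engulfing0}. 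Strict convexity at $0$ implies $S_t^w(0) \to \{0\}$ as $t \to 0$, so for $t$ small enough (depending on $w$, hence ultimately on $\delta$ and $\alpha$ via compactness) we have $S_t^w(0) \subset \alpha S_1^w(0)$, with the latter inclusion being strict (compactly contained). The final step is to transfer this back: since $S_t^{w_k}(0) \to S_t^w(0)$ and $S_1^{w_k}(0) \to S_1^w(0)$ in the Hausdorff sense (from the local uniform convergence of the $w_k$ and the continuity of sections in both the function and the height), for $k$ large we get $S_t^{w_k}(0) \subset \alpha S_1^{w_k}(0)$; undoing the renormalization, this reads $S_{th_k}^{u_k}(0) \subset \alpha S_{h_k}^{u_k}(0)$, and choosing $k$ so large that $1/k < t$ (using that sections are nested in $h$) gives $S_{h_k/k}^{u_k}(0) \subset \alpha S_{h_k}^{u_k}(0)$, the desired contradiction. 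The threshold $t$ is then promoted to a universal $\beta(\delta,\alpha)$ by the usual compactness argument: if no uniform $\beta$ worked, the above would produce a contradiction along some sequence.

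The main obstacle I expect is the strict-convexity-at-the-boundary-point claim for the limit $w$: one must carefully argue that a segment of linearity through $0$ can be arranged (after an affine normalization) to enter the region $\{x_2 > 0\}$ where $\det D^2 w \geq 1$, so that Lemma \ref{Segment1} applies. The subtlety is the same one handled in Lemma \ref{Segment2} --- the segment could a priori be tangent to $\partial \Omega^c$ at the single point $0$ and otherwise lie in $\Omega^c$ --- but here $0$ is a boundary point of the convex set $\Omega^c$ and $0 \in \overline\Omega$, so any chord of $S_1^w(0)$ through $0$ that is not tangent to $\partial\Omega^c$ at $0$ immediately enters $\Omega$; the tangential case is ruled out because $w$ cannot be linear along an entire chord contained in $\overline{\Omega^c}$ without $\det D^2 w$ having an extremal point of its zero set violating local uniform convexity of $w$ on $\Omega$ near $0$, just as in Lemma \ref{Segment2}. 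Everything else --- the renormalization bookkeeping, the Hausdorff convergence of sections, and nestedness in $h$ --- is routine.
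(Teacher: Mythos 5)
Your overall strategy -- compactness/contradiction, renormalize in $S_{h_k}^{u_k}(0)$, pass to a limit $w$, and reduce to Lemma \ref{Segment1} -- is exactly the paper's strategy, so the core idea is right. However, there is a cleaner and more direct way to finish, which the paper uses and your route obscures. The paper extracts the segment of linearity for the limit $w$ \emph{directly from the contradiction hypothesis}: the points $z_k \in S_{1/k}^{w_k}(0)\setminus \alpha S_1^{w_k}(0)$ accumulate at some $z\in\partial(\alpha S_1^w(0))$, and since the heights $1/k$ tend to $0$ (so by the Alexandrov estimate $\sup(L_k - w_k) \lesssim 1/k$, where $L_k$ is the affine function defining $S_{1/k}^{w_k}(0)$), the limiting affine function $L$ satisfies $w \geq L$ everywhere and $w \leq L$ on $[0,z]$, i.e.\ $w$ is linear on $[0,z]$. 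Because $0$ is the center of mass of $S_{1/k}^{w_k}(0)$, the segment actually extends to the other side of $0$ too, so $w$ is linear along a chord \emph{through} $0$, and then Lemma \ref{Segment1} gives the contradiction (after noting that such a chord through $0\in\partial\Omega^c$, with $\Omega^c$ convex, must either enter $\Omega$ or lie along a flat piece of $\partial\Omega^c$ -- in both cases $\det D^2 w\geq\chi_{\{x_2>0\}}$ near an interior point of the chord after an affine change of variables).

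Your indirect route -- first prove $w$ is strictly convex at $0$, then push back to $w_k$ -- introduces two issues the paper avoids. First, in the "transfer back" you invoke "sections are nested in $h$," i.e.\ $S_{h_1}^u(y)\subset S_{h_2}^u(y)$ for $h_1<h_2$. For \emph{centered} sections this is not obvious and is nowhere established in the paper; the affine function defining the section changes slope with $h$ to maintain the center-of-mass constraint, so the sets are not a priori nested. (The paper cites \cite{CM} only for \emph{continuity} in $h$.) Second, you need Hausdorff convergence of centered sections under uniform convergence of the functions, which, while plausible, requires some care with the center-of-mass constraint. Both can be sidestepped: once you know $w$ has no segment of linearity through $0$, you can argue directly that $\mathrm{diam}\,S_{1/k}^{w_k}(0)\to 0$ (otherwise the Alexandrov argument above produces a segment of linearity through $0$ for $w$), which is the contradiction; no nestedness or intermediate height $t$ is needed. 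Finally, the justification of the strict-convexity claim is somewhat garbled in places (the parenthetical about segments in $\Omega^c$, and the claim that one "could extend to a full chord"); the cleaner statement is that any segment of linearity \emph{through} $0$ must be a chord of $\Omega^c$ through the boundary point $0$, hence (by convexity of $\Omega^c$) either enters $\Omega$ on one side or lies on a supporting line of $\Omega^c$, and in either case Lemma \ref{Segment1} applies after an affine change of variables. The two-sidedness coming from the center-of-mass property is the crucial point; your phrasing "a segment from $0$ into $S_1^w(0)$" understates this.
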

\begin{proof}
The argument is similar to the one above. If the lemma is false, there is a sequence $u_k \in \mathcal{F}_{\delta}$ such that (up to translations) $0$ is in the boundary of a hole and $S_{h_k/k}^{u_k}(0)$ are not contained in $\alpha S_{h_k}^{u_k}(0)$. After renormalizing in the sections $S_{h_k}^{u_k}(0)$, we get a subsequence of rescalings that converge to a function $w$ which satisfies that $S_1^w(0)$ is normalized, $\det D^2w = 1$ in a domain $\Omega$ with $\Omega^c$ convex and $0 \in \partial \Omega$, and $w$ is linear along a line segment passing through the origin with an endpoint on $\partial(\alpha S_1^w(0))$. Again, we contradict Lemma \ref{Segment1}.
\end{proof}

Finally, we sketch the proof that if $u \in \mathcal{F}_{\delta},\, 0 \in \partial \Omega_1 \backslash \partial \Omega_0$, and $h_1 \sim h_2$, then $R_{h_1}(0)$ is approximated by (contains and is contained in dilations by universal constants of) $R_{h_2}(0)$. This fact was used in the proof of Theorem \ref{Main} in Section \ref{ProofMain}. It suffices to show that $S_{h_1}^u(0)$ is approximated by $S_{h_2}^u(0)$. After renormalizing in $S_{h_1}^u(0)$ we get a convex function $w$ such that $S_1^w(0) = \{w < 0\}$ is normalized, and we need to show that $S_c^w(0)$ is approximated by $B_1$, where $c \sim 1$. Since 
$$|S_c^w(0)| \sim 1$$ 
we just need to show that $S_c^w(0)$ contains a ball centered at $0$ that has small universal radius. If not, then by the local universal Lipschitz bound on $w$, the slope of the linear function defining $S_c^w(0)$ is extremely large, say after a rotation, $K e_1$ with $K >> 1$. But in this case the line segment in $S_c^w(0)$ through the origin parallel to $e_1$ would intersect $\partial S_c^w(0)$ at a distance much shorter from the origin on the left than on the right, contradicting that $0$ is the center of mass of $S_c^w(0)$ and completing the proof.



\end{document}